\numberwithin{equation}{subsection}
\begin{document}

\title[Mapping Properties of the Heat Operator on Edge Manifolds]
{Mapping Properties of the Heat Operator \\ on Edge Manifolds}

\keywords{Regularity of heat operator, incomplete edge metric, complete edge metric, semilinear
parabolic equations}

\author{Eric Bahuaud}
\address{Department of Mathematics,
Seattle University,
901 12th Avenue,
Seattle, WA 98122,
USA}

\email{bahuaude (at) seattleu (dot) edu}

\author{Emily B.\ Dryden}
\address{Department of Mathematics,
Bucknell University,
Lewisburg, PA 17837,
USA}

\email{emily (dot) dryden (at) bucknell (dot) edu}
\urladdr{http://www.facstaff.bucknell.edu/ed012}

\author{Boris Vertman}
\address{Mathematisches Institut,
Universit\"at Bonn,
53115 Bonn,
Germany}
\email{vertman (at) math (dot) uni-bonn (dot) de}
\urladdr{www.math.uni-bonn.de/people/vertman}

\subjclass[2010]{58J35, 35B65, 35K05}
\date{\today}

\begin{abstract}
{We consider the heat operator acting on differential forms on spaces with complete and incomplete edge metrics. 
In the latter case we study the heat operator of the Hodge Laplacian with algebraic boundary conditions at the 
edge singularity. We establish the mapping properties of the heat operator, recovering and extending the classical results from smooth 
manifolds and conical spaces. The estimates, together with strong continuity of the heat operator, yield short-time 
existence of solutions to certain semilinear parabolic equations. Our discussion reviews and generalizes earlier 
work by Jeffres and Loya.}
\end{abstract}

\maketitle

\tableofcontents

\section{Introduction and statement of the main results}

Edge metrics of complete and incomplete type provide interesting examples of geometries that include asymptotically hyperbolic, asymptotically cylindrical and conic spaces.  Incomplete and complete edge metrics behave differently from a geometric perspective.  However, these metrics lend themselves to a parallel approach when one wants to do constructions of an analytic flavour.  For example, to analyze asymptotics of solutions to the heat equation near spatial infinity in the complete case, one must introduce a geometric compactification.  The resulting complete edge metrics, like their incomplete cousins, give rise to differential operators that are degenerate or singular in a sufficiently controlled manner that  generalization of classical results is possible.  In particular, we consider the Hodge Laplacian on manifolds with either type of edge metric and establish regularity properties of solutions to the heat equation.  We study the cases in parallel, reviewing and extending previous work by Jeffres and Loya for conic and b-metrics \cite{JefLoy:RSH, JefLoy:RHM}.  Recall that the Hodge Laplacian on compact manifolds has nonnegative spectrum.

We begin by defining our metrics precisely.  Let $\overline{M}$ be an $m$-dimensional compact manifold with boundary $\partial M$, where $\partial M$ is the total space of a fibration $\phi: \partial M \to B$, and the fibre $F$ and base $B$ are closed manifolds. We consider a defining function $x:C^{\infty}(\overline{M})\to \R^+\cup \{0\}$ of the boundary $\partial M$ with $x^{-1}(0)=\partial M$ and $dx\neq 0$ on $\partial M$. Using the integral curves of $\textup{grad} (x)$ we identify a collar neighbourhood $U\subset \overline{M}$ of the boundary $\partial M$ with $[0,1)\times \partial M$, where $\partial M$ is identified with $\{0\}\times \partial M$. \medskip

\begin{defn}\label{d-edge}
A Riemannian manifold $(\overline{M} \backslash \partial M,g):=(M,g)$ has a simple
\begin{enumerate}
\item incomplete edge at $B$ if, with respect to the identification $U\backslash \partial M \cong (0,1)\times \partial M$, the Riemannian metric $g=g_0+h$ and $g_0$ attains the form 
$$g_0\restriction U\backslash \partial M=dx^2+x^2 g^F+\phi^*g^B,$$
\item complete edge at $B$ if, with respect to the identification $U\backslash \partial M \cong (0,1)\times \partial M$, the Riemannian metric $g=g_0+h$ and $g_0$ attains the form 
$$g_0\restriction U\backslash \partial M=\frac{dx^2}{x^2}+\frac{\phi^*g^B}{x^2} + g^F,$$
\end{enumerate}
where $g^B$ is a Riemannian metric on the closed manifold $B$, $g^F$ is a symmetric 2-tensor on the fibration $\partial M$ restricting to a Riemannian metric on each fibre $F$, and $|h|_{g_0}=O(x)$ as $x \to  0$.  
\end{defn}

We will refer to either of these metrics as an edge metric.  
In the case of an incomplete edge metric, if $\dim F = 0$ then $(M,g)$ is the interior of a compact 
(non-singular Riemannian) manifold with boundary $B\times F$. Hence we 
consider $\dim F \geq 1$ in the incomplete setup.

As the names imply, incomplete or complete edge metrics are incomplete or complete as Riemannian metrics.  In the complete case $\partial M$ is at infinite distance from any point in the interior of $\overline{M}$; note that the two types of edge metrics are conformal in the interior of $\overline{M}$.  Examples of complete edge metrics include asymptotically hyperbolic (conformally compact or $0$-metrics) and asymptotically cylindrical (b-metrics) metrics, with $\dim F = 0$ and $\dim B = 0$ respectively.  The product metric on $\bH^{b+1} \times \bS^f$ provides an example of a complete edge metric with trivial fibration structure.  Examples of incomplete edge metrics include conic metrics and conformal compactifications of asymptotically hyperbolic metrics.

We consider a slightly restricted class of edge metrics, requiring that 
$\phi: (\partial M, g^F + \phi^*g^B) \to (B, g^B)$ be a Riemannian submersion. Recall that
if $p\in \partial M$, then $T_p\partial M$ splits into vertical and horizontal subspaces as $T^V_p \partial M \oplus
T^H_p \partial M$, where $T^V_p\partial M$ is the tangent space to the fibre of $\phi$ through $p$ and $T^H_p \partial M$ is the
orthogonal complement of this subspace. The new condition on $g_0$ 
implies that the restriction of the tensor $g^F$ to $T^H_p \partial M$ vanishes. Moreover, in the case of incomplete edge metrics we need to assume that the Laplacians associated to $g^F$ at each $b\in B$ are isospectral. 
We summarize these additional conditions in the definition below.

\begin{defn}\label{def-feasible}
Let $(M,g)$ be a Riemannian manifold with an edge metric. This metric $g=g_0+h$ is said to be feasible if 
\begin{enumerate}
\item $\phi: (\partial M, g^F + \phi^*g^B) \to (B, g^B)$ is a Riemannian submersion;
\item if the edge metric is incomplete, the Laplacians associated to $g^F$ at each $b\in B$ are isospectral.
\end{enumerate}
\end{defn} 

\begin{remark}
The isospectrality condition ensures polyhomogeneity of the associated heat kernels 
when lifted to the corresponding parabolic blowup space. Our arguments require only conormality of the heat kernels, and fundamentally depend only on the leading orders of the kernels at the various 
boundary faces of the blowup; this is much less than the full polyhomogeneity. 
Nevertheless we need to pose the isospectrality condition in the incomplete edge case within the present discussion, as the actual constructions of the various heat kernels
in \cite{Moo:HMC} and \cite{MazVer:ATM} seem to require polyhomogeneity in an essential way.
\end{remark}

Our perspective for establishing regularity for solutions to the heat equation is as follows: the mapping properties of the corresponding heat operator are encoded in the asymptotic behaviour of the heat kernel on an appropriate blowup of the heat space, as established by Mazzeo and the third author \cite{MazVer:ATM} for incomplete and by Albin \cite{Alb:RIT} for complete edge metrics.  The regularity is discussed in terms of spaces with bounded edge derivatives and appropriate H\"older spaces, which take into account the underlying singular or asymptotic geometry. To make this precise we introduce the notion of edge vector fields $\mathcal{V}_e$.

\begin{defn} Let $\overline{M}$ be a compact manifold with boundary $\partial M$ being the total space of a fibration $\phi:\partial M \to B$ with fibre $F$. Define the vector space $\mathcal{V}_e$ to be the space of vector fields smooth in the interior of $\overline{M}$ and tangent at the boundary $\partial M$ to the fibres of the fibration. 
\end{defn}

The space $\mathcal{V}_e$ is closed under the ordinary Lie bracket of vector fields, hence defines a Lie algebra. In local coordinates $\mathcal{V}_e$ can be described as follows. Let $y=(y_1,...,y_{b}),b=\dim B$ be the local coordinates on $B$ lifted to $\partial M$ and then extended inwards. Let $z=(z_1,...,z_f),f=\dim F$ restrict to local coordinates on $F$ along each fibre of $\partial M$. Then $(x,y,z)$ are the local coordinates on $\overline{M}$ near the boundary and the edge vector fields $\mathcal{V}_e$ are locally generated by 
\[
\left\{x\frac{\partial}{\partial x}, x\frac{\partial}{\partial y_1}, \dots, x \frac{\partial}{\partial y_b}, \frac{\partial}{\partial z_1},\dots, \frac{\partial}{\partial z_f}\right\}.
\]
The tangent bundle ${}^eTM$ is naturally associated to  $\mathcal{V}_e=C^\infty(M,{}^eTM)$, defined by requiring
that the edge vector fields $\mathcal{V}_e$ form a spanning set of sections. The dual to ${}^eTM$ is denoted by ${}^eT^*M$ and is spanned locally by the following set of one-forms:
\begin{align}\label{triv}
\left\{\theta^e_1,\dots, \theta^e_m\right\} = \left\{\frac{dx}{x}, \frac{dy_1}{x}, \dots, \frac{dy_b}{x}, dz_1,\dots,dz_f\right\};
\end{align}
though singular in the usual sense, these one-forms are smooth as sections of ${}^eT^*M$. 
We denote ${}^e\Lambda^p M= \Lambda^p({}^eT^*M)$ and write $C(M,{}^e\Lambda^p M)$ for the space of continuous sections in ${}^e\Lambda^p M$. Similarly, we define ${}^{ie}T^*M$ to be spanned locally by 
\[
\left\{\theta^{ie}_1,\dots, \theta^{ie}_m\right\} = \left\{dx, dy_1, \dots, dy_b, x dz_1,\dots, x dz_f\right\},
\]
and write $C(M,{}^{ie}\Lambda^p M)$ for the space of continuous sections in 
${}^{ie}\Lambda^p M =\Lambda^p({}^{ie}T^*M)$.
In fact 
$$
C(M,{}^{ie}\Lambda^p M) = x^p C(M,{}^{e}\Lambda^p M).
$$
 
\begin{defn}\label{defn:norms}
Let $(M,g)$ be a Riemannian manifold with an incomplete  edge metric. 
\begin{enumerate}
\item 
Let $\w \in C(M,{}^{ie}\Lambda^p M)$ be
given locally by 
$
\w = \sum_{|I|=p} \w_I \, \theta^{ie}_I, \ \w_I \in C(\overline{M}),
$
with each $I=(i_1,\dots,i_p)$ being a multiindex and $\theta^{ie}_I= 
\theta^{ie}_{i_1} \wedge \dots \wedge \theta^{ie}_{i_p}$.
For $k\in \N$, let $C^k_e(M,{}^{ie}\Lambda^p M)$ denote the space of
all $\w \in C(M,{}^{ie}\Lambda^p M)$ such that 
for any choice of edge vector fields $V_j\in \mathcal{V}_e, j \leq k$, each
$V_1\dots V_j\, \w_I \in C(\overline{M})$. 
Locally, we may write
$$V_1 \dots V_j \, \w := \sum_{|I|=p} (V_1 \dots V_j\w_I) \, \theta^{ie}_I.$$
Considering suprema over each local coordinate 
neighbourhood, we put
$\|\w\|= \sup_{|I|=p} \|\w_I\|_\infty,$
and define a norm on $C^k_e(M,{}^{ie}\Lambda^p M)$ by
$$\| \w \|_k := \| \w \| + \sum_{j \leq k}  \| V_1 \dots V_j \w \|.$$
It can be shown that coordinate changes lead to equivalent norms.

\item The H\"older space $C^{\A}_e(M,{}^{ie}\Lambda^p M), \A\in (0,1),$ 
 consists of $\w \in C(M,{}^{ie}\Lambda^p M)$ such that the H\"older norm
$$\| \w \|_\A = \| \w \| + 
\sup_{|I|=p}  \left\|\frac{\w_I(q) -  \w_I(q')}{d(q,q')^\A}\right\|_{\infty, (q,q')\in \overline{M}^2} < \infty,$$
 where $d(q,q')$ represents the distance between $q,q'\in \overline{M}$ 
with respect to the Riemannian metric $g$.  Note that in the local neighbourhood of the edge these distances are uniformly equivalent to
$$d((x,y,z), (\wx, \wy, \wz))^2 \approx |x-\wx|^2+|y-\wy|^2+(x+\wx)^2|z-\wz|^2.$$
\end{enumerate}
\end{defn}

For a Riemannian manifold $(M,g)$ 
with a complete  edge metric, we replace ${}^{ie}\Lambda^p M$ by ${}^{e}\Lambda^p M$ in Definition \ref{defn:norms}; the distance $d(q,q')$ with respect to a complete edge metric $g$ is uniformly
equivalent to 
$$d((x,y,z), (\wx, \wy, \wz))^2 \approx \frac{|x-\wx|^2+|y-\wy|^2}{(x+\wx)^2}+|z-\wz|^2.$$

For an incomplete edge space $(M,g)$ we may also consider the Banach space of continuous sections 
$\mathscr{C}_e^0(M,{}^{ie}\Lambda^p M)$ which are fibrewise constant at $x=0$. 
This is precisely the space of continuous sections with respect to the topology on $M$ induced by the Riemannian metric $g$. 
The corresponding space of continuous $k$-times edge-differentiable sections shall be denoted by $\mathscr{C}^k_e(M,{}^{ie}\Lambda^p M)$, with
\[
 \mathscr{C}^k_e(M,{}^{ie}\Lambda^p M):=\{u \in  C^k_e(M,{}^{ie}\Lambda^p M) 
\mid V_1\cdots V_j u \in \mathscr{C}^0_e(M,{}^{ie}\Lambda^p M) \ \textup{for any} \ V_i \in \mathcal{V}_e \},
\]
which is a Banach subspace of $C^k_e(M,{}^{ie}\Lambda^p M)$. For the H\"older space with fractional differentiability 
we write $\mathscr{C}^{\A}_e(M,{}^{ie}\Lambda^p M):=C^{\A}_e(M,{}^{ie}\Lambda^p M)$.  
Note that similar considerations are possible in the setup of complete edge metrics, but in that case these do not lead to a refinement of the 
regularity statements below.

The heat operator acts between weighted edge spaces.  A differential form $u$ is in a weighted edge space, denoted $u \in x^\beta C^k_e$, 
if and only if $u = x^\beta \w$, with $\w \in C^k_e$.  Our estimates with respect to these spaces are as follows, where a precise definition of the (non-logarithmic) algebraic boundary conditions on incomplete edges is given in \S \ref{section-algebraic}. 

\begin{thm} \label{thm:main:incomplete}
Let $(M,g)$ be an $m$-dimensional Riemannian manifold with a feasible incomplete edge metric. Let $e^{-t\Delta_\Gamma}$ denote the heat operator corresponding to a non-logarithmic algebraic self-adjoint extension $\Delta_\Gamma$ of the Hodge Laplacian on $p$-forms on $(M,g)$. 
Consider $\nu \in \R^+$ such that $(\pm \nu +1/2)$ is an indicial root of the (rescaled) $p$-form Hodge Laplacian.
We write $\nu_{\min}$ for the minimum of these numbers. We denote by $\nu_q$ the maximal number $\nu \in [0,1)$ such that the asymptotic expansion of solutions in the domain of $\Delta_\Gamma$ at the edge admits terms of the form $x^{- \nu +1/2}$ or $\sqrt{x}\log (x)$ for $\nu =0$.
We define a range of weights 
\begin{align*}
&\beta \in (-(\dim F+3)/2 - \nu_{\min}, (1-\dim F)/2 + \nu_{\min}], \\  &\qquad \qquad 
 \qquad \qquad\textup{if $\Delta_\Gamma$ is the Friedrichs extension,} \\
&\beta \in (-(\dim F+3)/2 + \nu_q, (1-\dim F)/2 - \nu_q],  \\ &\qquad  \qquad  \textup{if $\Delta_\Gamma$ is not the Friedrichs extension.} 
\end{align*}
Then $e^{-t\Delta_\Gamma}$ is bounded for any fixed $t \in \R^+$ with estimates of the form 
\begin{equation} 
\label{est:Ck-spaces-inc} 
\begin{split}
&e^{-t\Delta_\Gamma}: x^{\beta}C^k_e(M,{}^{ie}\Lambda^p M) \to x^{\beta}C^{k+n}_e(M,{}^{ie}\Lambda^p M), \\
&\|e^{-t\Delta_\Gamma}\omega \|_{x^{\beta}C^{k+n}_e}\leq C t^{-n/2}\|\omega\|_{x^{\beta}C^k_e}, 
\end{split}
\end{equation}
and with respect to H\"older spaces
\begin{equation}
\label{est:holder-spaces-inc} 
\begin{split}
&e^{-t\Delta_\Gamma}: x^{\beta}C^{\A}_e(M,{}^{ie}\Lambda^p M) \to x^{\beta}C^{2}_e(M,{}^{ie}\Lambda^p M), \\ 
&\|e^{-t\Delta_\Gamma}\omega\|_{x^{\beta }C^{2}_e}\leq C t^{-1+\A/2}\|\omega\|_{x^{\beta}C^{\A}_e}.
\end{split}
\end{equation} 
Precisely the same estimates are true with $C^{k}_e(M,{}^{ie}\Lambda^p M)$ spaces 
replaced by $\mathscr{C}^{k}_e(M,{}^{ie}\Lambda^p M), k\in \N$, if the weight $\beta$
does not attain the right boundary value of the weight interval.
\end{thm}

In the special case of functions (i.e., $p=0$), we have $\nu_{\min}=(\dim F-1)/2$ and the possible weights in Theorem \ref{thm:main:incomplete} are
$\beta \leq 0$. The weight $\beta=0$ is of particular interest in \S \ref{s-short}, and possible only in the action of the Friedrichs heat kernel.
In the mapping properties of the Friedrichs heat kernel on $x^\beta \mathscr{C}^{k}_e(M,{}^{ie}\Lambda^p M)$, however, $\beta=0$ is a priori excluded.
Nevertheless, in the function case harmonic forms on the fibre $F$ are constant,
which leads to additional features of the heat kernel and allows for a refinement of the regularity statement as follows.

\begin{thm} \label{thm:main:incomplete-functions}
Let $(M,g)$ be an $m$-dimensional Riemannian manifold with a feasible incomplete edge metric. 
Let $e^{-t\Delta_{\mathscr{F}}}$ denote the heat operator corresponding to the Friedrichs self-adjoint extension 
$\Delta_{\mathscr{F}}$ of the Laplace-Beltrami operator on $(M,g)$. Then $e^{-t\Delta_{\mathscr{F}}}$ is bounded for any fixed $t \in \R^+$ with estimates of the form
\begin{equation} 
\label{est:Ck-spaces-inc-functions} 
\begin{split}
&e^{-t\Delta_{\mathscr{F}}}: \mathscr{C}^k_e(M) \to \mathscr{C}^{k+n}_e(M), \\
&\|e^{-t\Delta_{\mathscr{F}}}f\|_{C^{k+n}_e}\leq C t^{-n/2}\|f\|_{C^k_e}. 
\end{split}
\end{equation}
\end{thm}

In the case of complete edge metrics, we have the following estimates. 

\begin{thm} \label{thm:main:complete}
Let $(M,g)$ be an $m$-dimensional Riemannian manifold with a feasible complete edge metric. Let $e^{-t\Delta}$ denote the heat operator of the unique self-adjoint extension $\Delta$ of the associated Hodge Laplacian on $p$-forms on $(M,g)$. Then for any $w \in \bR$, $e^{-t\Delta}$ is bounded for any fixed $t\in \R^+$ with estimates of the form
\begin{equation} 
\label{est:Ck-spaces} 
\begin{split}
&e^{-t\Delta}: x^{\beta} C^k_e(M,{}^{e}\Lambda^p M) \to x^{\beta} C^{k+n}_e(M,{}^{e}\Lambda^p M), \\
&\|e^{-t\Delta}f\|_{x^{\beta} C^{k+n}_e}\leq C t^{-n/2}\|f\|_{x^{\beta} C^k_e}, 
\end{split}
\end{equation}
and with respect to H\"older spaces
\begin{equation}
\label{est:holder-spaces} 
\begin{split}
&e^{-t\Delta}: x^{\beta} C^{\A}_e(M,{}^{e}\Lambda^p M) \to x^{\beta} C^{2}_e(M,{}^{e}\Lambda^p M), \\ 
&\|e^{-t\Delta}f\|_{x^{\beta} C^{2}_e}\leq C t^{-1+\A/2}\|f\|_{x^{\beta} C^{\A}_e}.
\end{split}
\end{equation}
\end{thm}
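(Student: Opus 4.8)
The plan is to reduce the weighted statement to the unweighted one by conjugation, and then to run the same kernel-integration scheme one uses for the incomplete case in Theorem~\ref{thm:main:incomplete}. Writing the heat operator as
\[
(e^{-t\Delta}f)(p) = \int_M H(t,p,q)\, f(q)\, dV_g(q),
\]
the assertion $e^{-t\Delta}\colon x^w C^k_e \to x^w C^{k+n}_e$ is equivalent, via $u \in x^w C^k_e \iff x^{-w}u \in C^k_e$, to the boundedness $C^k_e \to C^{k+n}_e$ of the operator whose kernel is the conjugate $H_w(t,p,q) = (x(q)/x(p))^w\, H(t,p,q)$. So the first task is to record exactly how multiplication by $(x(q)/x(p))^w$ acts on the lifted kernel: it shifts the orders of vanishing of $H$ at the two side boundary faces of the blown-up heat space by $\pm w$, while leaving both the front face and the conormal behaviour along the lifted temporal diagonal untouched.

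Next I would recall from Albin's construction \cite{Alb:RIT} the precise polyhomogeneous structure of $H$ on the complete edge heat space: its conormal singularity along the lifted diagonal together with its index sets at the front and side faces. The two quantitative inputs that matter are (i) the Gaussian-type concentration near the temporal diagonal, encoded by the blow-up of $\{p=q,\ t=0\}$, which yields a gain of $t^{-1/2}$ each time an edge vector field hits the kernel; and (ii) the decay orders at the boundary faces, which make the fibrewise integrals converge uniformly up to $\partial M$. Because conjugation only moves the side-face orders by $\pm w$, and these faces retain enough room for every $w \in \bR$, the uniform convergence of the integrals survives; this is exactly where the arbitrariness of the weight and the uniqueness of the self-adjoint extension enter.

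With this in place, the two estimates follow from integral arguments adapted to the rescaled coordinates on the heat space. For \eqref{est:Ck-spaces} I apply up to $k+n$ fields $V_1,\dots,V_{k+n} \in \mathcal{V}_e$ to $e^{-t\Delta}f$; moving $k$ of them onto $f$ (using that $\mathcal{V}_e$ is a Lie algebra) and letting the remaining $n$ differentiate $H_w$ in the left variable, each of the latter contributes a bound $C\,t^{-1/2}$ by the temporal-diagonal scaling, which produces the prefactor $t^{-n/2}$; continuity up to the boundary then follows from polyhomogeneity of $H_w$ and dominated convergence. For \eqref{est:holder-spaces} I difference $e^{-t\Delta}f$ at two nearby points $p,p'$, insert the reference term coming from the heat evolution of $x^w$ so as to replace $f(q)$ by $f(q)-f(p)$ in the integrand, and estimate $|f(q)-f(p)| \le \|f\|_{\A}\, d(p,q)^{\A}$ against the twice-differentiated kernel; balancing the two gained edge derivatives against the available $\A$ regularity of $f$ and the separation $d(p,p')$ gives the scaling $t^{-1+\A/2}$, where all distances are the complete edge distances recorded in the definition of $C^{\A}_e$.

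The main obstacle I anticipate is precisely the uniform control of the fibre integrals near the front face and the side faces after conjugation: one must verify that shifting the side-face index sets by $\pm w$ never destroys the integrability and uniform boundedness needed both for continuity up to $\partial M$ and for the sharp powers of $t$. This is the step where the polyhomogeneous expansion of \cite{Alb:RIT}, together with the good geometry of the complete edge, is genuinely indispensable; by contrast, the combinatorics of commuting edge vector fields through the kernel and the bookkeeping of the $t$-powers are routine once the index sets are known.
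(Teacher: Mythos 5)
Your proposal follows essentially the same route as the paper: reduce to $w=0$ by conjugating with $x^{w}$ (observing that the ratio $\wx/x$ only shifts the side-face orders, which is harmless because $H$ vanishes to infinite order at lf and rf and $\wx/x=1-\eta S$ is smooth near $\mathrm{td}\cap\mathrm{ff}$), then estimate the lifted kernel chart by chart on Albin's blowup, with each edge derivative hitting the kernel near the diagonal costing $t^{-1/2}$, and with the H\"older gain $\eta^{\A}$ against $\eta^{-2}$ giving $t^{-1+\A/2}$.

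Two points in your sketch should be tightened, as they are where the actual work lies. First, transferring $k$ of the $k+n$ edge derivatives onto $f$ is not a consequence of $\mathcal{V}_e$ being a Lie algebra: the vector fields act in the left variable $p$, and the paper converts the singular components $\eta^{-1}\partial_S$, $\eta^{-1}\partial_U$, $\eta^{-1}\partial_Z$ into derivatives of $f$ by integration by parts in the projective coordinates \eqref{diag-coord}, using the identity $\eta^{-1}\partial_S f(x(1-\eta S),\dots)=-(1-\eta S)^{-1}(\wx\partial_{\wx}f)(\dots)$ and the rapid decay of the lifted kernel as $|(S,U,Z)|\to\infty$ to kill the boundary terms. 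Second, the ``reference term'' that lets you replace $f(q)$ by $f(q)-f(p)$ in \eqref{est:holder-spaces} is exactly stochastic completeness, $e^{-t\Delta}1=1$; on a complete manifold this is not automatic and is supplied in the paper by Yau's theorem via the Ricci lower bound of the complete edge metric. Note also that the target of \eqref{est:holder-spaces} is $C^{2}_e(M,g)$, so no differencing of $e^{-t\Delta}f$ at two points $p,p'$ is needed --- one only bounds $V_e^2e^{-t\Delta}f$ pointwise after the subtraction.
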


\noindent Although these estimates may be established
using classical PDE techniques, we emphasize a parallel microlocal approach
to both the incomplete and complete setups.

An immediate observation from these estimates is that after taking into account the geometry of the underlying space, the mapping properties of the heat operator resemble the well-known behaviour on compact manifolds. As a particular consequence of our results, we establish short-time existence for solutions to certain semilinear parabolic equations on manifolds with edge metrics.  Applications of such equations, including the reaction-diffusion equation, may be found in \cite{CazHar:ISE}.

\begin{cor} \label{corr}
Let $(M,g)$ be an $m$-dimensional Riemannian manifold with a feasible edge metric. Let $\Delta$ denote the Friedrichs extension of the Laplace-Beltrami operator on $(M,g)$ for an incomplete edge metric, and the unique self-adjoint extension of the Laplace-Beltrami operator for a complete edge metric. 
Let the pair $(X,Y)$ denote
\begin{enumerate}
 \item either $(C^{2}_e(M), C^{\A}_e(M))$ for $\A\in (0,1)$ or $(\mathscr{C}^{k+1}_e(M), \mathscr{C}^{k}_e(M)), k\in \N$, if $g$ is an incomplete edge metric;
 \item either $(C^{2}_e(M), C^{\A}_e(M))$ for $\A\in (0,1)$ or $(C^{k+1}_e(M), C^{k}_e(M)), k\in \N$, if $g$ is a complete edge metric.
\end{enumerate}
For $u \in X$, let $\Phi:X\to Y$ denote a locally Lipschitz inhomogeneous term. Then the initial value problem
$$\frac{\partial u}{\partial t} \ +\  \Delta \ - \ \Phi(u)=0, \quad u(0)=u_0\in X,$$
admits a solution $u \in C([0,T];X)$ on some time interval $[0,T]$ for $T>0$. 
\end{cor}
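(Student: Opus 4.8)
The plan is to recast the initial value problem via Duhamel's principle and to solve the resulting integral equation by a contraction mapping argument. A mild solution is a fixed point of the map
\[
\Phi(u)(t) := e^{-t\Delta}f-\int_0^t e^{-(t-s)\Delta}\,Q\bigl(u(s),Vu(s)\bigr)\,ds,
\]
so I would set $\Phi$ up on a suitable complete space of $X$-valued paths and invoke the Banach fixed-point theorem. The analytic input is supplied entirely by Theorems~\ref{thm:main:incomplete} and~\ref{thm:main:complete}: in each admissible pair the heat operator gains regularity from $Y$ to $X$ at the cost of an integrable power of time,
\[
\|e^{-\tau\Delta}g\|_{X}\le C\,\tau^{-\gamma}\,\|g\|_{Y},\qquad \tau>0,
\]
with $\gamma=1-\A/2$ in the H\"older case (estimates~\eqref{est:holder-spaces-inc} and~\eqref{est:holder-spaces}) and $\gamma=\tfrac12$ in the $C^{k}$ case (estimates~\eqref{est:Ck-spaces-inc} and~\eqref{est:Ck-spaces} with $n=1$). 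In every case $\gamma\in(0,1)$, which is the decisive structural feature. The hypothesis that $Q$ depends on at most one edge derivative is exactly what makes $Q(u,Vu)\in Y$ consistent with $u\in X$, since then $Vu$ sheds one order of edge regularity; combined with the continuous inclusion $X\hookrightarrow Y$, every term above is well defined. The argument is uniform over incomplete and complete metrics, and over the refined incomplete spaces $\mathscr{C}^{\bullet}_e$, for which the cited estimates hold verbatim.

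To expose the mechanism, consider first data in the finer space $X$. Working in the closed ball of radius $\rho$ in $C([0,T];X)$ about the path $t\mapsto e^{-t\Delta}f$, the local Lipschitz constant $L=L(\rho)$ of $Q$ on the associated $X$-ball is finite, and
\begin{align*}
\Bigl\|\int_0^t e^{-(t-s)\Delta}Q\bigl(u(s),Vu(s)\bigr)\,ds\Bigr\|_{X}
&\le C\int_0^t (t-s)^{-\gamma}\bigl\|Q\bigl(u(s),Vu(s)\bigr)\bigr\|_{Y}\,ds \\
&\le \frac{C\,T^{1-\gamma}}{1-\gamma}\,\sup_{0\le s\le T}\bigl\|Q\bigl(u(s),Vu(s)\bigr)\bigr\|_{Y}.
\end{align*}
The same estimate applied to a difference yields the Lipschitz bound $\tfrac{CL\,T^{1-\gamma}}{1-\gamma}\|u-v\|_{C([0,T];X)}$. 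Since $\gamma<1$ both quantities vanish as $T\to0$, so for $T$ small $\Phi$ maps the ball into itself and contracts; using the strong continuity of $e^{-t\Delta}$ on $X$, its unique fixed point is a solution $u\in C([0,T];X)$ with $u(0)=f$.

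For data lying only in $Y$ the linear part $e^{-t\Delta}f$ blows up in $X$ like $t^{-\gamma}$ as $t\to0$, so I would instead close the iteration in the time-weighted space with norm $\sup_{0<t\le T}t^{\gamma}\|u(t)\|_{X}$, within which $t\mapsto e^{-t\Delta}f$ is bounded by $C\|f\|_{Y}$. Now the nonlinear Duhamel term carries a product of two singularities and one is led to $\int_0^t(t-s)^{-\gamma}s^{-\gamma}\,ds=c_\gamma\,t^{1-2\gamma}$, finite precisely because $\gamma<1$; after multiplication by the weight $t^{\gamma}$ it contributes a factor $c_\gamma\,t^{1-\gamma}$, again small for small $T$, so the contraction scheme again closes and produces a mild solution that is continuous into $X$ for positive time and attains $f$ continuously in $Y$. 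The hard part is precisely this simultaneous control, against a merely locally Lipschitz $Q$, of the heat-smoothing singularity at $s=t$ and the initial blow-up of $\|u(s)\|_{X}$ at $s=0$; the convergence of the above Beta-type integral, equivalent to $\gamma<1$, is what lets the iteration close on a short interval. The remaining points---Bochner measurability and continuity of $s\mapsto e^{-(t-s)\Delta}Q(u(s),Vu(s))$, and the verification via the semigroup property that the fixed point solves the equation---are routine.
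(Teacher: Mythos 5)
Your overall strategy is the same as the paper's: Duhamel's principle converts the problem to an integral equation, which is solved by contraction using the $t^{-\gamma}$, $\gamma<1$, smoothing estimates from Theorems \ref{thm:main:incomplete} and \ref{thm:main:complete} (the paper simply cites \cite[Proposition 15.1.1]{Tay:PDE} rather than running the Picard iteration by hand). However, there is one genuine gap: you invoke ``the strong continuity of $e^{-t\Delta}$ on $X$'' as if it were routine, and on edge manifolds it is not. It is precisely the hypothesis of Taylor's theorem that does not follow from the mapping estimates already proved, and establishing it is the entire substantive content of the paper's Section \ref{s-short}: one uses stochastic completeness of the heat kernel to write $e^{-t\Delta}f-f$ as an integral against $f(\widetilde p)-f(p)$, splits the integration region according to $d(p,\widetilde p)\gtrless\delta(\epsilon)$, and then verifies uniform bounds separately in each projective coordinate chart near the corners of the front face; for $k\geq 1$ an additional integration by parts in $S$ is needed to tame the $\eta^{-1}\partial_S$ components of the lifted edge vector fields. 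None of this is covered by the estimates you cite, so your proof is incomplete at exactly the point where new work is required.

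A secondary concern: your time-weighted scheme for data merely in $Y$ does not close as stated when $Q$ is only \emph{locally} Lipschitz, since $\|u(s)\|_X\sim s^{-\gamma}$ escapes every fixed ball as $s\to 0$ and the local Lipschitz constant $L(\rho)$ is then uncontrolled; you flag this as ``the hard part'' but do not resolve it. The paper sidesteps the issue by appealing to Taylor's formulation wholesale. This does not affect the main verdict, but it means the second half of your argument is a sketch rather than a proof.
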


\begin{remark}
We have stated Corollary \ref{corr} for the Laplacian acting on functions.  The same result holds in the complete case for the Hodge Laplacian on differential
forms.  At the time of writing we were unable to adapt our argument to the case of the Hodge Laplacian for differential forms in the incomplete setting.  Please see \S \ref{s-short} for further explanation.
\end{remark}

Our discussion of the heat operator in the edge setup reviews and generalizes the work of Jeffres and Loya in \cite{JefLoy:RSH} and \cite{JefLoy:RHM}, where the case $\dim B=0$ was considered. Their work was based on the heat kernel analysis by Mooers \cite{Moo:HMC} in the incomplete (conical) case and by Melrose \cite{Mel:APS} in the complete (b-cylindrical) setup.  
The analysis of the heat operator in the presence of incomplete singularities was initiated by Cheeger \cite{Che:SGS}, with major contributions by Br\"uning and Seeley \cite{BruSee:RES}, \cite{BruSee:ERS}, Lesch \cite{Les:OFT}, Melrose \cite{Mel:APS} and Mazzeo \cite{Maz:ETD}, to select a few. Related questions on regularity properties of solutions to parabolic equations in the singular setup have been studied in \cite{CorSchSei:DOC}, \cite{Li:EON} and \cite{Loy:TOH}.  Most recently, a study of the inhomogeneous Cauchy problem on manifolds with incomplete conical metrics has been presented by Behrndt \cite{Behrndt}.

The paper is organized as follows. In \S\S \ref{s-edges}-\ref{section-algebraic} we review edge operators and the Hodge Laplacian in this setting.  We also discuss algebraic boundary conditions to obtain self-adjoint extensions of the Hodge Laplacian in the incomplete case.  In \S \ref{s-asymptotics} we review the asymptotic properties of the heat kernel as a polyhomogeneous distribution on the appropriate blowup of the heat space.  We discuss the incomplete and complete cases separately since the asymptotic properties are different.  In \S \ref{s-mapping} we apply the asymptotics of the heat kernel to derive the mapping properties of the heat operator and hence the regularity of solutions to the heat equation, carefully estimating the corresponding integral in various regions of the heat-space blowup. Finally, in \S \ref{s-short} we explain how these mapping properties yield short-time existence for solutions to certain semilinear parabolic equations. \bigskip

\emph{Acknowledgements:} 
We are indebted to Rafe Mazzeo for his constant support and interest in this project.
It is also a pleasure for us to acknowledge helpful discussions with Pierre Albin and Paul Loya. 
We thank an anonymous referee for careful reading and important remarks.
The first author appreciates the hospitality of the Bucknell University Mathematics Department under the auspices of the Distinguished Visiting Professor program during the preparation of this paper. The first and second authors are grateful to the Mathematical Sciences Research Institute for giving them the opportunity to be exposed to this area during the Fall 2008 program ``Analysis of Singular Spaces.''  The second author was partially supported by a grant from the Simons Foundation (210445 to Emily B. Dryden), and also thanks the School of Mathematics at Trinity College Dublin for their hospitality.
The third author gratefully acknowledges 
financial support by the German Research Foundation DFG as well as by the Hausdorff Center for 
Mathematics in Bonn, and also thanks Stanford University for its hospitality.

\section{Differential edge operators} \label{s-edges}

We review here some basic elements of  elliptic operator theory on edge spaces, 
as discussed in the seminal paper by Mazzeo \cite{Maz:ETD} (see also \cite{MazVer:ATM}). We consider local coordinates 
$(x,y,z)$ on $\overline{M}$ near the boundary, where $x$ is a boundary defining function, 
coordinates $(y)$ are lifted from the base $B$, and $(z)$ restrict to coordinates on $F$ along each fibre of $\partial M$. 
By definition, a differential edge operator $L \in \textup{Diff}^{n}_e(M)$ can be written locally as a sum of products of 
elements of edge vector fields $\calV_e$, with coefficients in $\calC^\infty(\overline{M})$. Hence
\[
L=\sum_{j+|\A|+|\beta|\leq n} a_{j,\A,\beta}(x,y,z)(x\partial_x)^j(x\partial_y)^{\A}\partial_z^{\beta},
\]
with each $a_{j,\alpha,\beta}$ smooth up to $x=0$. More generally, acting between vector bundle sections, $L$ 
is required to attain this form with respect to suitable local trivializations, with each $a_{j,\alpha,\beta}$ being matrix-valued. 
The operator is called \emph{edge elliptic} if its edge symbol
\[
{}^e \sigma_n(L)(x,y,z;\xi,\eta,\zeta) :=  \sum_{j + |\A| + |\be| = n} a_{j,\A,\be}(x,y,z) \xi^j \eta^{\A} \zeta^{\be}
\]
is nonvanishing (or invertible, if matrix-valued), for $(\xi,\eta,\zeta) \neq (0,0,0)$. 

There are two fundamental operators 
associated to a differential edge operator $L$. To define them, we introduce coordinates $(s,u) \in {\mathbb R}^+ \times {\mathbb R}^b$; this half-space should be interpreted as the inward-pointing normal space to the fibre of $\partial \overline{M}$ through $(0,y_0,z)$.
The first model is the normal operator 
\[
N(L)_{y_0} = \sum_{j + |\A| + |\be| \leq n} a_{j,\A,\be}(0,y_0,z) (s\del_s)^j (s\del_u)^{\A} \del_z^{\be}.
\]
Though almost  of the same complexity as $L$, the normal operator is translation invariant in $u$ and dilation invariant in $(s,u)$ jointly, and is the main ingredient in the construction of both a 
parametrix for $L$ and a heat kernel parametrix for $L$.

The second model is the indicial operator
\[
I(L)_{y_0} = \sum_{j + |\be| \leq n} a_{j,0,\be}(0,y_0,z) (s\del_s)^j \del_z^\be;
\]
Mellin transform in $s$ reduces $I(L)_{y_0}$ to the indicial family
\[
I_\zeta(L)_{y_0} = \sum_{j + |\be| \leq n} a_{j,0,\be} (0,y_0,z) \zeta^j \del_z^\be,
\]
which is a holomorphic family of unbounded Fredholm operators on $L^2(F)$. Values of $\zeta$ for which 
$I_\zeta(L)_{y_0}$ is not invertible are called indicial roots of $L$. It can be shown that the second condition for the edge metric $g$ to be feasible
ensures discreteness of indicial roots in ${\mathbb C}$ for the Hodge Laplacian associated to $g$.

Note that as an operator acting on sections of ${}^{ie} \Lambda^k(M)$,
the $p$-form Hodge Laplacian for an incomplete edge metric $g$ is an element of 
$x^{-2} \textup{Diff}^2_e(M;{}^{ie}\Lambda^k(M))$; for $g$ a complete edge metric, the Hodge Laplacian lies in $\textup{Diff}^2_e(M;{}^e\Lambda^k(M))$.

\section{Hodge Laplacian for incomplete edge metrics}

Let $\Delta_p$ denote the Hodge operator on $p$-forms associated to a feasible incomplete edge metric $g$.
For the convenience of the reader, in this section we review the analysis in \cite{MazVer:ATM} that provides the structure of the normal operator $N(x^2\Delta_p)_{y_0}$ and of its indicial roots.  Let $\mathscr{C}(F)$ denote the cone over $F$, and let $y_0 \in B$ be arbitrary.  Then $N(x^2\Delta_p)_{y_0}$ acts on $p$-forms on the model edge 
$\RR^b \times \mathscr{C}(F) = \RR^+_s \times \RR^b_u \times F$ with incomplete edge metric 
$g_{\textup{ie}} = ds^2 + s^2 g^F + |du|^2$; moreover, the normal operator is naturally identified with 
$s^2$ times the Hodge Laplacian associated to $g_{\textup{ie}}$. 

Consider the tangent space $TS_a \equiv T(\RR^b \times F)$ to a hypersurface $S=\{s=a\}$ of the model edge.  There is an orthogonal decomposition of $TS_a$ as the sum of the tangent space to the $F$ factor (the `vertical' subspace) and the tangent space to the Euclidean factor (the `horizontal' subspace).  This splitting induces a bigrading
\begin{align}\label{bigrading}
\Lambda^p(S_a) = \bigoplus_{j + \ell = p} \Lambda^j(\RR^b) 
\otimes \Lambda^\ell (F) := \bigoplus_{j+\ell = p} \Lambda^{j,\ell}(S_a). 
\end{align}
Let $\Omega^{j,\ell}(S)$ denote the space of sections of the corresponding summand in this bundle decomposition.  
We simplify the description of indicial roots below by writing the normal operator 
$N(x^2\Delta_p)_{y_0}$ on $\RR^b \times \mathscr{C}(F)$ with 
respect to a rescaling of the form bundles; this rescaling was employed by Br\"uning-Seeley \cite{BruSee:ITF}
and in slightly different form in \cite{HM}. More precisely, for each $j,\ell$ with $j + \ell = p$, we 
write ${}^{ie}\Omega^p(\RR^b \times \mathscr{C}(F)):= C^\infty (\RR^b \times \mathscr{C}(F), {}^{ie}
\Lambda^p(\RR^b \times \mathscr{C}(F)))$ and define
\begin{align*}
& \phi_{j,\ell}: \calC^{\infty}(\RR^+, \Omega^{j,\ell-1}(S) \oplus \Omega^{j, \ell}(S)) 
\rightarrow {}^{ie}\Omega^p(\RR^b \times \mathscr{C}(F)), \\
& \qquad (\eta, \mu) \longmapsto s^{\ell-1-f/2}\eta \wedge ds + s^{\ell-f/2}\mu. 
\end{align*}
We denote by $\Phi_p$ the sum of these maps over all $j+\ell = p$. As in the case of 
conical singularities, the resulting transformation
\begin{equation}\label{unitary}
\begin{split}
\Phi_p: L^2\left(\RR^+, L^2\left(\bigoplus_{j+\ell=p} \Omega^{j,\ell-1}(S) 
\oplus \Omega^{j,\ell}(S), g_F(0) + |du|^2 \right) , ds\right) \qquad \\
\longrightarrow L^2(\Omega^p(\RR^b \times \mathscr{C}(F)), g_{\textup{ie}}),
\end{split}
\end{equation}
is an isometry; a calculation yields
\begin{align}\label{laplace}
\Phi_p^{-1}\left[s^{-2}N(x^2\Delta_p)\right]\Phi_p=\left(-\frac{\del^2}{\del s^2}+\frac{1}{s^2}(A-1/4)\right) + \Delta_{\R^b}, 
\end{align}
where $A$ is the nonnegative self-adjoint operator on $\Omega^{\ell-1}(F) \oplus \Omega^{\ell}(F)$ given by 
\begin{align}\label{a}
A=\left(\begin{array}{cc}\Delta_{\ell-1,F} + (\ell-(f +3)/2)^2 & 2(-1)^{\ell}\, \delta_{\ell,F}\\ 2(-1)^{\ell}\, d_{\ell-1,F}& \Delta_{\ell,F}+ 
(\ell-(f - 1)/2)^2\end{array}\right).
\end{align}
As alluded to above, rescaling the form bundles in this way leads to a simple expression for the indicial roots of $\Delta_p$.  Denoting the eigenvalues of $A$ by $\nu_j^2, \nu_j \geq 0$, with corresponding eigenform $\phi_j$, we may write the
corresponding indicial roots of $\Delta_p$ as
\begin{equation}
\gamma_j^{+} = \nu_j + \frac12 \ , \quad \gamma_j^{-} = - \nu_j + \frac12 . 
\label{indroots}
\end{equation}
If $\nu_j >0$, the corresponding solutions have the form $s^{\gamma_j^{\pm}}\phi_j$; for $\nu_j=0$, they are $\sqrt{s}\, \phi_j$ and $\sqrt{s}(\log s) \phi_j$.  Note that for $\nu_j \geq 1$, the solutions of the form $s^{\gamma_j^{-}} \phi_j$ are not in $L^2(\R^+_s)$.

We can define a similar rescaling $\Phi$ using powers of the defining function $x$ 
in each local coordinate chart near the boundary $\partial M$. Setting $x\equiv 1$
away from the collar neighbourhood $U$ of $\partial M$, we can trivially extend this rescaling to the rest of $M$. 
The construction is independent of our choice of coordinate charts, as rescalings on different charts differ by a diffeomorphism.  Thus we may again conjugate by $\Phi$ and consider the rescaled operator $\Delta_p^\Phi$; we will abuse notation and also use $\Delta_p$ to denote the rescaled operator if the meaning is clear from the context.  We may view $\Delta_p$ as a perturbation of \eqref{laplace}, with higher order correction terms determined by the curvature of the Riemannian submersion $\phi: \partial M \to B$ and the second fundamental forms of the fibres $F$.

\section{Algebraic boundary conditions on incomplete edges}\label{section-algebraic}

The Hodge Laplacian of an incomplete edge space $(M,g)$ need 
not be essentially self-adjoint on its core domain $\calC^\infty_0\Omega^p(M)$, 
so we must consider how to impose boundary conditions at the edge to
obtain self-adjoint extensions.  For spaces with isolated conic singularities, this was first
accomplished by Cheeger \cite{Che:SGS}.  Further studies in the conic setting 
appear in \cite{Les:OFT}; see also \cite{Moo:HMC} and \cite{KLP} for results about the associated heat equation. 

In the case of isolated conical singularities the extension problem is finite dimensional. When the edge has positive dimension, 
the requisite analysis is more intricate and here we can specify only a class of \emph{algebraic} boundary conditions 
for the Hodge Laplacian. We begin by introducing conormal and 
polyhomogeneous distributions on a manifold with corners.

\begin{defn}\label{phg}
Let $\mathfrak{W}$ be a manifold with corners, with all boundary faces embedded, and $\{(H_i,\rho_i)\}_{i=1}^N$ an enumeration 
of its boundaries and the corresponding defining functions. For any multi-index $b= (b_1,
\ldots, b_N)\in \C^N$ we write $\rho^b = \rho_1^{b_1} \ldots \rho_N^{b_N}$.  Denote by $\mathcal{V}_b(\mathfrak{W})$ the space
of smooth vector fields on $\mathfrak{W}$ which lie
tangent to all boundary faces. A distribution $w$ on $\mathfrak{W}$ is said to be conormal
if $w\in \rho^b L^\infty(\mathfrak{W})$ for some $b\in \C^N$ and $V_1 \ldots V_\ell w \in \rho^b L^\infty(\mathfrak{W})$
for all $V_j \in \mathcal{V}_b(\mathfrak{W})$ and for every $\ell \geq 0$. An index set 
$E_i = \{(\gamma,p)\} \subset {\mathbb C} \times {\mathbb N}$ 
satisfies the following hypotheses:
\begin{enumerate}
\item $\textup{Re}(\gamma)$ accumulates only at $+ \infty$;
\item for each $\gamma$ there exists $P_{\gamma}\in \N_0$ such 
that $(\gamma,p)\in E_i$ if and only if $p \leq P_\gamma$;
\item if $(\gamma,p) \in E_i$, then $(\gamma+j,p') \in E_i$ for all $j \in {\mathbb N}$ and $0 \leq p' \leq p$. 
\end{enumerate}
An index family $E = (E_1, \ldots, E_N)$ is an $N$-tuple of index sets. 
Finally, we say that a conormal distribution $w$ is polyhomogeneous on $\mathfrak{W}$ 
with index family $E$, denoted $w\in \mathscr{A}_{\textup{phg}}^E(\mathfrak{W})$, 
if $w$ is conormal and if near each $H_i$ we have
\[
w \sim \sum_{(\gamma,p) \in E_i} a_{\gamma,p} \rho_i^{\gamma} (\log \rho_i)^p, \ 
\textup{as} \ \rho_i\to 0,
\]
with coefficients $a_{\gamma,p}$ conormal on $H_i$ and polyhomogeneous with index $E_j$
at any $H_i\cap H_j$. 
\end{defn}

For more on polyhomogeneous distributions and other relevant background material, 
we refer the reader to the classical references \cite{Mel:APS} and \cite{Maz:ETD}, 
as well as the excellent introduction to the $b$-calculus by Grieser \cite{Gr}.

Let $\Delta_p^\Phi = \Delta_p$ denote the rescaled Hodge Laplace operator acting on differential forms of degree $p$
on the incomplete edge space $(M,g)$ with a feasible incomplete edge metric $g$.  Consider the space of $L^2$-forms in $\Omega^p(M)$ 
with respect to $g$, denoted $L^2 \Omega^p(M)$. The \emph{maximal} and \emph{minimal} closed extensions of $\Delta_p$ are defined by the domains
\begin{equation}
\begin{split}
\calD_{\max}(\Delta_p) &:= \{ u\in L^2\Omega^p(M) \mid  \Delta_p u \in L^2\Omega^p(M) \}, \\ 
\calD_{\min}(\Delta_p) &:= \{ u \in \calD_{\max}(\Delta_p)  \mid \exists\,  u_j \in \calC^\infty_0\Omega^p(M)\ \mbox{such that}  \\ 
&u_j \to u\ \mbox{and}\ \Delta_p u_j \to \Delta_p u\ \mbox{, with both sequences converging in}\ L^2\Omega^p(M) \},
\end{split}
\end{equation}
where $\Delta_p u\in L^2\Omega^p(M)$ is initially understood in the distributional sense. 
The set of all closed extensions of $\Delta_p$ is in bijective correspondence with the closed subspaces of the quotient 
$\calD_{\max}/\calD_{\min}$; furthermore, since $\Delta_p$ is symmetric on the core domain 
$\calC^\infty_0\Omega^p(M)$, self-adjoint extensions are in 
bijective correspondence with the subspaces of this quotient which are Lagrangian with respect to a certain natural symplectic 
form induced from the boundary contributions in an integration by parts formula (e.g., \cite[\S 3]{KLP}). 

\begin{lemma} \cite[Lemma 2.2]{MazVer:ATM}\label{max}
Let $(M,g)$ be an incomplete edge space with a feasible edge metric. 
Any $u \in \calD_{\max}(\Delta_p)$ admits a weak asymptotic expansion as $x\to 0$
\begin{equation}\label{expansion-w}
u \, \sim \, \sum_{j=1}^q \left( c_j^+[u](y) \psi_j^+(x,z) + c^-_j[u](y) \psi_j^-(x,z) \right) + 
\tilde{u}, \ \tilde{u} \in \calD_{\min}(\Delta_p),
\end{equation}
where the index $j=1,\dots,q$ counts the indicial roots $\gamma_j^{\pm} = \pm \nu_j + 
\frac{1}{2}$ of $\Delta_p$ with increasing $\nu_j\in [0,1)$ and the leading order term of 
each $\psi_j^{\pm}$ is the corresponding solution of the indicial operator.  More precisely,
$\psi^+_j(x,z)=x^{\gamma_j^+}\phi_j(z)$, where $\phi_j(z)\in \Omega^*(F)$ are
the normalized $\nu_j^2$-eigenforms of the tangential operator \eqref{a} at $y$.
If $\nu_j=0$, then $\psi^-_j(x,z)=\sqrt{x}(\log x)\phi_j(z)$. If $\nu_j>0$, then 
$\psi^-_j(x,z)=x^{\gamma_j^-}(1+a_jx)\phi_j(z)$, with $a_j\in \R$ uniquely
determined by $\Delta_p$.\footnote{The expansion for $\psi_j^-$ is not given explicitly in \cite[Lemma 2.2]{MazVer:ATM}.  The terms $x^{\gamma_j^-+1}$ appear from the standard
argument using the Mellin transform and inverting the holomorphic
indicial family.   These terms are not of sufficiently high order to be absorbed into $\tilde{u}$.}

The coefficients  $c_j^{\pm}[u]$ are of negative regularity in $y$, meaning that these functions are in a Sobolev space over $B$ of negative order. Thus the 
asymptotic expansion holds only in a weak sense; that is, there is an expansion of the pairing
$\int_B (u(x,y,z) \chi(y))_{g^B}\, dy$ for any test function $\chi \in \Omega^*(B)$. 
\end{lemma}

We next give \emph{algebraic} boundary conditions for $\Delta_p$ by specifying algebraic relations among the coefficients $c_j^{\pm}$.
Note that though the discussion above was in the context of rescalings $\Phi_p$ in each local coordinate neighbourhood, the partial weak 
asymptotic expansion of the rescaled solutions in $\calD_{\max}(\Delta)$ is invariant under coordinate changes
$x'(x,y,z), y'(x,y,z)$ and $z'(x,y,z)$, with $x'(0,y,z)=0$ and $y'$ lifted from the base, so that 
$y'(0,y,z)$ is independent of $z$. Thus, any  specification of algebraic relations between the coefficients $c_j^{\pm}$
and hence also the characterization of algebraic boundary conditions for $\Delta_p$ will be globally well-defined.

Following the description of self-adjoint boundary conditions of the Hodge Laplacian on a 
cone in (\cite{Moo:HMC}, Section 7), we consider $\Lambda_q$ the $2q$-dimensional vector space spanned by solutions $\{\psi_j^{\pm}\}_{j=1}^q$.
We define the bilinear form $\w_q$ on $\Lambda_q$ by 
\begin{equation}
\begin{split}
&\w_q(\psi_j^{+},\psi_j^{-}) = - \w_q(\psi_j^{-},\psi_j^{+}) = \left \{
\begin{split} &2\nu_j, \ \nu_j >0, \\ &1, \ \ \ \, \nu_j =0, \end{split} \right. \\
&\w_q(\psi_j^{+},\psi_j^{+}) = \w_q(\psi_j^{-},\psi_j^{-}) = \w_q(\psi_i^{\pm},\psi_j^{\pm}) = 0, i \neq j.
\end{split}
\end{equation}
A subspace of $\Lambda_q$ on which the form $\omega_q$ vanishes can be represented by a $q \times q$ matrix $\Gamma = (\Gamma_{ij})$ with 
diagonal entries $\Gamma_{jj}=b_{jj} \psi_j^{-} + \theta_{jj} \psi_j^{+}$
and off-diagonal entries $\Gamma_{ij}= \theta_{ij} \psi_j^{+}$; the coefficients  
$b_{ij}, \theta_{ij}\in \R$ are such that either $b_{ii}=1$ or $b_{ii}=0$, where in the
latter case we require $\theta_{ii}=1$ and $\theta_{ij}=0$ for $i\neq j$. We refer to such 
a matrix $\Gamma$ as the \emph{Lagrangian matrix}.   If $b_{jj}=0$ whenever $\nu_j=0$, we call $\Gamma$ \emph{non-logarithmic}, as in this case there are no so-called ``unusual'' logarithmic terms in the expansion of the heat trace (cf. \cite{KLP}).

\begin{defn}\label{defn:DG}
For any Lagrangian matrix  $\Gamma = (\Gamma_{ij}),$
we define the associated algebraic domain of the Hodge Laplacian $\Delta_p$ by 
\begin{align*}
\calD_{\Gamma}(\Delta_p) := \{u\in \calD_{\max}(\Delta_p) \mid 
\forall \ i=1,\dots,q: \, \sum_{j=1}^q \w_q( c_j^+[u] \psi_j^+ + c^-_j[u] \psi_j^- , \Gamma_{ij})=0\}.
\end{align*}
\end{defn}

The algebraic boundary conditions provide a full characterization of self-adjoint extensions of the Hodge
Laplacian on cones,  cf. \cite[\S 7]{Moo:HMC} and \cite{KLP}. Note that Prop. 2.5 of \cite{MazVer:ATM} identifies the Friedrichs
extension of $\Delta_p$ as the algebraic self-adjoint extension associated to $\Gamma=\textup{diag}(\psi_1^+,\dots,\psi_q^+)$.
A full classification of self-adjoint extensions of the Hodge Laplacian on incomplete edges requires a detailed analysis 
of the elliptic theory of edge differential operators and is beyond the scope of the present discussion. 

Moreover, standard arguments from the conical setup which show self-adjointness of ${\calD}_{\Gamma}(\Delta_p)$ 
do not apply here directly, since due to the weakness of the asymptotic expansion 
in \eqref{expansion-w} the symplectic form $\w_q$ can be evaluated 
explicitly only on polyhomogeneous $u,v\in \calD_{\max}$ and not on the full $\Lambda_q$.
Hence, even symmetry of $\calD_{\Gamma}(\Delta_p)$ is not obvious here and 
requires a mollification argument.  
The mollification argument, however, does not apply to the second-order degenerate operator $\Delta_p$
in any obvious way unless $B$ is either zero-dimensional or Euclidean. To overcome this technical difficulty,
we write $\Delta = \oplus_p \Delta_p$ for the full Laplacian. Its normal operator is also of the form \eqref{laplace} 
with the tangential operator $A=\oplus_p A_p$. Note that $\Delta = D^t D$, where $D$ denotes 
the Gauss-Bonnet operator of $(M,g)$. By \cite[Lemma 2.4]{MazVer:ATM} any $u \in \calD_{\max}(D)$ 
admits a weak asymptotic expansion as $x\to 0$
\begin{equation}\label{expansion-u}
u \, \sim \, \sum_{j=1}^p c_j[u] \, \phi_j(z;y) \, x^{-\nu_j + 1/2} + \tilde{u}, \ \tilde{u} \in \calD_{\min}(D),
\end{equation}
where $\phi_j$ is some normalized $\nu_j^2$-eigenform of the tangential operator $A$ at $y$,
and the coefficients  $c_j[u]$ are of negative regularity in $y$, i.e. the 
asymptotic expansion holds only in a weak sense. The Lagrange identity for $D$ acting on 
$\calD_{\max}(D) \cap \mathscr{A}_{\textup{phg}}$ is worked out in \cite[(2.9)]{MazVer:ATM}. 
In the conical case, we may classify all self-adjoint extensions of $D$ similar to Definition \ref{defn:DG} by 
specifying linear relations $S$ among the coefficients $c_j[u]$. Each such choice $S$ gives a well-defined domain 
$\calD_{S}(D)$ in the case of incomplete edges and we prove the following  

\begin{prop}
$\calD_{S}(D)$ defines a self-adjoint extension of $D$.
\end{prop}

\begin{proof}
We first prove that $D$ is symmetric on $\calD_{S}(D)$.
Note that we cannot follow the arguments from the conical case directly, since the 
expansion \eqref{expansion-u} holds only in the weak sense.

Consider any solution $w\in \calD_S(D)$.
Let $\phi$ be a cut-off function supported in a local coordinate neighborhood $(x,y,z)$. 
Assume $\phi(x,y,z)=\phi_1(x) \phi_2(y,z)$, where $\phi_1$ is identically one in an open neighborhood 
of $x=0$, and $\phi_2$ a smooth bump function around some $(y_0, z_0)\in \partial M$. 
Then $u:= w \cdot \phi$ is still in $\calD_S(D)$. Consider a coefficient $u_I$ of the form-valued $u$ and a test function $\psi \in C^\infty(B)$ and write 
$$
(u_I * \psi)(x,y,z)= \int_B u_I(x,y-\wy, z) \psi(\wy) d\wy.
$$
We can assemble local functions $u_I * \psi$ back into a differential form, 
which we denote by $u * \psi$. The convolution $u * \psi$ inherits the expansion as $x\to 0$ from $u$, 
and hence $u * \psi \in \calD_S(D)$. Moreover, due to pairing with $\psi \in C^\infty(B)$, the coefficients 
$c_j[u*\psi]$ are now smooth in $y$ and hence $u*\psi \in \calD_S(D) \cap \mathscr{A}_{\textup{phg}}$. 
Specify now $\psi$ to be a bump function, compactly supported around the coordinate origin of $(y)$
with $ \widehat{\psi}(0)=1$, where $\widehat{\psi}$ denotes the Fourier transform of $\psi$; we set $\int_B \psi(y) dy = 1$.  Consider a sequence $\psi_\epsilon (y) := \epsilon^{-b}\psi(y/\epsilon)$. Then
pointwise
$$
\widehat{\psi_\epsilon}(\zeta) = \widehat{\psi}(\epsilon \zeta) \rightarrow \widehat{\psi}(0) =1,
\ \textup{as} \ \epsilon \to 0.
$$
Set $u_\epsilon = u * \psi_\epsilon$. Observe that for each coefficient $u_{\epsilon, I}= u_I * \psi_\epsilon$, the Fourier transform $\widehat{u}_I$ is $L^2$-integrable, and that $(\widehat{\psi}(\epsilon \, \cdot) - 1)$
is bounded uniformly in $\epsilon$; hence by the dominated convergence theorem,
\begin{align}\label{molli}
\| \widehat{u}_{\epsilon, I} - \widehat{u}_{I}\|_{L^2} = \| \widehat{u}_I \left(\widehat{\psi}(\epsilon \, \cdot) - 1\right)\|_{L^2} 
 \rightarrow 0, \ \textup{as} \ \epsilon \to 0.
\end{align}
This proves $u_\epsilon \to u$ in $L^2$ as $\epsilon \to 0$. Moreover, we write componentwise
\begin{align*}
D (u * \psi_\epsilon) &= (D u) * \psi_\epsilon + 
\sum_{k\in \mathscr{K}}  \int_B  \big(a_k(y) - a_k(y-\wy)\big)D_k u(y-\wy) \psi_\epsilon (\wy) d\wy\\
&=: (D u) * \psi_\epsilon + 
\sum_{k\in \mathscr{K}}  \int_B  \delta_{a_k}(y,\wy) D_k u(y-\wy) \psi_\epsilon (\wy) d\wy,
\end{align*}
where $a_k D_k, k \in \mathscr{K},$ is the collection of summands in $D$ with $y$-dependent coefficients, each $a_k\in C^\infty(\overline{M})$, and we incorporate the eventual singular behaviour into $D_k$.
We will show that the sum converges to zero in $L^2$ and hence by exactly the same argument as above, 
$D u_\epsilon \to D u$ in $L^2$ as $\epsilon \to 0$. Thus, we can indeed approximate any 
locally supported $u\in \calD_S(D)$ by
a sequence $(u_\epsilon) \subset \calD_S(D) \cap \mathscr{A}_{\textup{phg}}$ in the graph norm.
\medskip

The conditions that $\partial M$ is a Riemannian submersion over $B$ and $|h|_{g_0}=O(x)$
as $x\to 0$ guarantee that $D_k$ is either in $\mathcal{V}_e$,
or $D_k  \in C^\infty - \textup{span} \{\partial_y\}$. In the first case, by elliptic edge theory (see \cite{Maz:ETD}), $D_ku_I \in L^2$ and hence we may apply the same argument as in \eqref{molli}. The argument for the 
more intricate latter case is different. For some $D_k =\partial_{y_j}$ we use integration by parts to compute (omit the lower index $I$)
\begin{align*}
&\int_B \partial_{y_j} u(y-\wy) \delta_{a_k}(y,\wy)  \psi_\epsilon (\wy) \, d\wy 
= \int_B  u(y-\wy) \,  \partial_{\wy_j}\left(\delta_{a_k}(y,\wy) \psi_\epsilon (\wy)\right) d\wy =\\
&\int_B  u(y-\wy) \,  \partial_{y_j} a_k(y-\wy) \, \psi_\epsilon (\wy) \, d\wy +
\int_B  u(y-\wy) \delta_{a_k}(y,\wy) \partial_{\wy_j} \psi_\epsilon (\wy) \, d\wy =:I_1 + I_2.
\end{align*}

By assumption, both $u$ and $\partial_{y_j} a_k \cdot u$ lie in $L^2$ and hence repeating the 
argument of \eqref{molli} we deduce that $I_1$ converges to $\partial_{y_j} a_k \cdot u$ in $L^2$ as $\epsilon \to 0$. For $I_2$
we expand $a_k(y-\wy)$ in a Taylor series around $y$ and write in the standard multiindex notation
\begin{align*}
I_2 \sim \sum_{|\A|=1}^\infty \frac{(-1)^{|\alpha|+1}}{|\A|!} \, \partial_y^\A a_k(y) \int_B u(y-\wy) \, \wy^\A \,
\partial_{\wy_j} \psi_\epsilon (\wy) \, d\wy
\end{align*}
For $|\A|\geq 2$ the Fourier transform of $\wy^\A \partial_{\wy_j} \psi_\epsilon (\wy)$ has at least one 
additional $\epsilon$ and hence converges pointwise to zero. Thus the corresponding summands 
converge to zero in $L^2$ by repeating the argument of \eqref{molli}. For $|\A|=1$ we let $Y_i$ denote
the operator that multiplies by $\wy_i$ and $\delta_{ij}$ denote the usual Kronecker delta; after integrating by parts, we obtain
\begin{align*}
\lim_{\epsilon \to 0}\widehat{Y_i \partial_{y_j} \psi_\epsilon}(\zeta) = 
\lim_{\epsilon \to 0}\widehat{Y_i \partial_{y_j} \psi}(\epsilon \zeta) =  \int_B \wy_i \, \partial_{\wy_j} \psi (\wy) \, d\wy 
= - \delta_{ij} \int_B \psi (\wy) \, d\wy = -\delta_{ij}.
\end{align*}

Hence, an argument similar to that in \eqref{molli} implies that $I_2$ converges to $(-\partial_{y_j} a_k \cdot u)$ in $L^2$ as $\epsilon \to 0$
and hence $I_1 +I_2$ converges to zero. This proves that we can indeed approximate any 
locally supported $u\in \calD_S(D)$ by
a sequence $(u_\epsilon) \subset \calD_S(D) \cap \mathscr{A}_{\textup{phg}}$ in the graph norm.

We claim that $D$ is symmetric on $\calD_S(D) \cap \mathscr{A}_{\textup{phg}}$.  Let $f,g \in \calD_S(D) \cap \mathscr{A}_{\textup{phg}}$.  Since $f$ is polyhomogeneous, it admits a (strong) asymptotic expansion $f \sim \bar{f} + \tilde{f}$, where $\tilde{f} \in \calD_{\min}(D)$ and $\bar{f}$ is a linear combination of solutions $\{\psi_j^{\pm}\}_{j=1}^q$; similarly, $g \sim \bar{g} + \tilde{g}$.  The same arguments as in \cite[Thm. 7.6]{Moo:HMC} show that 
\[
\langle D f, g \rangle - \langle f, D g \rangle = \omega_q(\bar{f}, \bar{g}).
\]
 But $f$ and $g$ satisfy the boundary conditions determined by $S$, so $\bar{f}$ and $\bar{g}$ are in the Lagrangian subspace associated to $S$ and hence $\omega_q(\bar{f}, \bar{g}) = 0$.

To show symmetry of $D$ on $\calD_{S}(D)$, we consider a partition of unity $(\phi_\A)$ subordinate to coordinate charts of $\overline{M}$.
For any $f,g \in \calD_{S}(D)$ we can write
\begin{align*}
\langle D f, g \rangle_{L^2} - \langle f, D g \rangle_{L^2} = 
\sum_{\A} \left( \langle D f, g \cdot \phi_\A \rangle_{L^2} - \langle f, D ( g\cdot \phi_\A) \rangle_{L^2} \right) = 0, 
\end{align*}
where the last equality follows from the facts that each $g \cdot \phi_\A$ may be approximated by elements of
$\calD_{S}(D) \cap \mathscr{A}_{\textup{phg}}$ in the graph norm, and that $D$
is symmetric on $\calD_{S}(D) \cap \mathscr{A}_{\textup{phg}}$.

In order to deduce self-adjointness on $\calD_{S}(D)$ it now suffices to show
$$\calD(D_{S}^*) := \{f\in \calD_{\max}(D) \mid \forall g \in \calD_{S}(D): 
\langle D f, g \rangle_{L^2} = \langle f, D g \rangle_{L^2} \} \subseteq \calD_{S}(D).$$
Let $f \in \calD(D_{S}^*)$. Then in any coordinate chart we may consider a locally 
supported $g\in \calD_{S}(D) \cap \mathscr{A}_{\textup{phg}}$, with its coefficients $c_j[g]$ 
being smooth with compact support in $\R^b$. The regularity of coefficients in the asymptotic expansion of $f$ 
is no longer an issue due to pairing with polyhomogeneous $g$ and hence, arguing as in the conical case \cite[Prop. 7.4]{Moo:HMC},
we deduce from $\langle D f, g \rangle_{L^2} = \langle f, D g \rangle_{L^2}$ 
that the coefficients $c_j[f]$ in the weak expansion of $f$ in that coordinate neighbourhood must satisfy the algebraic conditions 
of $\calD_{S}(D)$. Consequently, $f \in \calD_{S}(D)$.
\end{proof}

This mollification argument similarly yields self-adjointness of $\calD_{\Gamma}(\Delta_p)$ 
if $B$ is either zero-dimensional or Euclidean. Therefore we henceforth either restrict ourselves 
 to algebraic domains $\calD_{\Gamma}(\Delta_p)$ which arise as restrictions to $L^2\Omega^p$ of 
self-adjoint realizations $D_S^*D_S$, or assume that $B$ is either zero-dimensional or Euclidean.

\section{Asymptotics of the heat kernel on edge manifolds}\label{s-asymptotics}

Recall that we let $(M^m,g)$ be a Riemannian manifold with an edge at $B^b$ and a feasible edge metric $g$, where $M$ is the interior of a compact manifold $\overline{M}$ with boundary.  This boundary $\partial M$ is the total space of a fibration $\phi: \partial M \to B$, with fibre $F^f$. The local coordinates on $\overline{M}$ near $\partial M$ are given by $(x,y,z)$, where $(y)$ are local coordinates on $B$ lifted to $\partial M$ and then extended inwards, and $(z)$ restrict to local coordinates on $F$ along each fibre of $\partial M$.  Note that $m = 1 + b + f$.

Let $\Delta_p$ be a self-adjoint extension of the Hodge Laplacian on $(M,g)$ in the case of an incomplete edge, and the unique self-adjoint extension of the Hodge Laplacian in the case of a complete edge. The heat operator of $\Delta_p$, denoted $e^{-t\Delta_p}$, solves the homogeneous heat problem 
\[ \left\{ \begin{array}{rl} 
&(\partial_t + \Delta_p) u(x,y,z,t)  = 0 \\
&u(x,y,z,0) = \w(x,y,z), \ \w\in \calD(\Delta_p),\end{array} \right. \]
with $u=e^{-t\Delta_p}\w$. It is an integral operator 
\begin{equation} \label{eqn:hk-on-functions}
e^{-t\Delta_p}\w(q) = \int_M H\left( t, q, \widetilde{q} \right) \w(\widetilde{q}) \dv (\widetilde{q}),
\end{equation}
with the heat kernel $H$ being a distribution on the heat space $M^2_h:=\R^+\times \overline{M}^2$,
acting pointwise on ${}^{ie}\Lambda^p_{\widetilde{q}} M$ and taking values in
${}^{ie}\Lambda^p_q M$ in the case of an incomplete 
edge metric $g$. In the case of a complete edge metric, $H(t,q,\widetilde{q}) \in \textup{End} ({}^{e}\Lambda^p_{\widetilde{q}} M, 
{}^{e}\Lambda^p_q M)$. Viewing the heat kernel as a section in either 
${}^{ie} \Lambda^p M \boxtimes {}^{ie} \Lambda^p M$ in the incomplete case, or in 
${}^e \Lambda^p M \boxtimes {}^e \Lambda^p M$ in the complete case, we may also write
\begin{equation} \label{eqn:hk-on-functions2}
e^{-t\Delta_p}\w(q) = \int_M (H\left( t, q, \widetilde{q} \right), \w(\widetilde{q}))_g \dv (\widetilde{q}).
\end{equation}

Consider local coordinates $(t, (x,y,z), (\widetilde{x}, \widetilde{y}, \widetilde{z}))$ in the singular neighbourhood, where $(x,y,z)$ and $(\widetilde{x}, \widetilde{y}, \widetilde{z})$ are coordinates on the two copies of $M$ near the edge. 
The heat kernel $H$ has non-uniform behaviour at the diagonal $D$ and at the submanifold
\begin{align*}
A = \left\{ \begin{array}{ll}
\{ (t, (0,y,z), (0, \wy, \wz))\in \R^+ \times (\Mdel)^2 : t=0, \ y= \wy \}, & \textup{if $M$ is incomplete edge}, \\
\{ (t, (0,y,z), (0, \wy, \wz))\in \R^+ \times (\Mdel)^2 : y= \wy \}, & \textup{if $M$ is complete edge}.
\end{array}\right.
\end{align*}
The asymptotic behaviour of $H$ near these submanifolds of $M^2_h$ depends on the angle of approach to these submanifolds, and is crucial for estimation of the integral \eqref{eqn:hk-on-functions}. This dependence on the angle in the asymptotics of the heat kernel is conveniently treated by introducing polar coordinates around $A$ and $D$. Geometrically this corresponds to appropriate blowups of the heat space $M^2_h$, distinct for the incomplete and complete cases, such that the corresponding heat kernel lifts to a polyhomogeneous distribution on that blowup space, in the sense of Definition \ref{phg}.

\subsection{Heat kernel on spaces with incomplete edge metrics} \label{subsec:kernel_incomplete}
To obtain the correct blowup of $M^2_h$ in the case of an incomplete edge metric, one first does a parabolic blowup of the submanifold $A$ defined above.
The resulting heat space $[M^2_h, A]$ is defined as the union of
$M^2_h\backslash A$ with the interior spherical normal bundle of $A$ in $M^2_h$. The blowup $[M^2_h, A]$ is endowed with the unique minimal differential structure with respect to which smooth functions in the interior of $M^2_h$ and polar coordinates on $M^2_h$ around $A$ are smooth. This blowup introduces four new boundary hypersurfaces, which we denote by ff (the front face), rf (the right face), lf (the left face) and tf (the temporal face).  

The actual heat-space blowup $\mathscr{M}^2_h$ is obtained by a parabolic blowup of $[M^2_h, A]$ along the diagonal
\begin{align*}
D:=\{t=0, x=\wx, y=\wy, z=\wz\}\subset M^2_h,
\end{align*}
lifted to a submanifold of $[M^2_h, A]$. The resulting blowup $\mathscr{M}^2_h$ is defined as before by cutting out the submanifold and replacing it with its spherical normal bundle; a new boundary hypersurface, the temporal diagonal (td), appears.  The blowup $\mathscr{M}^2_h$ is a manifold with boundaries and corners as depicted in \cite[Fig. 3]{MazVer:ATM}.
The appropriate projective coordinates on $\mathscr{M}^2_h$ are given as follows. Near the top corner of ff away from tf the projective coordinates are given by
\[
\rho=\sqrt{t}, \  \xi=\frac{x}{\rho}, \ \widetilde{\xi}=\frac{\widetilde{x}}{\rho}, \ u=\frac{y-\widetilde{y}}{\rho}, \ y, \ z, \ \widetilde{z},
\]
where in these coordinates $\rho, \xi, \widetilde{\xi}$ are the defining functions of the faces ff, rf and lf, respectively. For the bottom corner of ff near lf, the projective coordinates are given by
\[
\tau=\frac{t}{x^2}, \ s=\frac{\wx}{x}, \ u=\frac{y-\widetilde{y}}{x}, \ x, \ y, \ z, \ \widetilde{z},
\]
where in these coordinates $\tau, s, x$ are the defining functions of tf, lf and ff, respectively. For the bottom corner of ff near rf the projective coordinates are obtained by interchanging the roles of $x$ and $\widetilde{x}$. The projective coordinates on $\mathscr{M}^2_h$ near the top of td away from tf are given by 
\[
\eta=\sqrt{\tau}, \ S =\frac{1-s}{\eta},\ U=\frac{u}{\eta}, \  \ Z =\frac{z-\widetilde{z}}{\eta}, \  x, \ y, \ z.
\]
In these coordinates tf is the face in the limit $|(S, U, Z)|\to \infty$, and ff and td are defined by $\widetilde{x}$ and $\eta$, respectively. The blowup heat space $\mathscr{M}^2_h$ is related to the original heat space $M^2_h$ via the obvious `blow-down map' $\beta: \mathscr{M}^2_h\to M^2_h,$
which in local coordinates is simply the coordinate change back to $(t, (x,y,z), (\widetilde{x}, \widetilde{y}, \widetilde{z}))$. 

Consider a local coordinate chart $\mathscr{U}_{(x,y,z)}$ and note that as in \eqref{bigrading}, the tangent space to
its hypersurface $\mathscr{U}_{x_0} =\{x=x_0\} \cap \mathscr{U}$ 
splits into the sum of a `vertical' and `horizontal' subspace, where the first is the 
tangent space to the fiber $F_z$-factor and the second is the tangent space to the base $B_y$-factor. 
This splitting is orthogonal, and induces a bigrading 
\begin{equation}\label{eqn:bigrading}
\Lambda^p(\mathscr{U}_{x}) = \bigoplus_{\ell + k= p} \Lambda^\ell(B_y) 
\otimes \Lambda^k (F_z) := \bigoplus_{\ell +k = p}  \Lambda^{\ell,k}(\mathscr{U}). 
\end{equation}
Then, locally, under the rescaling transformation $\Phi$ in \eqref{unitary}, the
heat kernel is a distribution on $\R^+ \times \mathscr{U}^2$, acting on and taking values 
in  
$$ \bigoplus_{\ell + k= p} \left(  \Lambda^{\ell,k-1}(\mathscr{U}) \oplus 
 \Lambda^{\ell,k}(\mathscr{U})\right). $$

We examine the asymptotic properties of the heat kernel as a polyhomogeneous distribution on the blowup $\mathscr{M}^2_h$. 
Denote by $\Delta_\Gamma$ the self-adjoint extension of the $p$-form Hodge Laplacian on the feasible incomplete edge space $(M,g)$, 
with algebraic boundary conditions defined by the Lagrangian matrix $\Gamma$. The Friedrichs extension is denoted by $\Delta_{\mathscr{F}}$. 
The focus of \cite{Moo:HMC} is the analysis of the asymptotic properties of the heat kernel for 
the Hodge Laplacian in the setting of isolated conical singularities, with general self-adjoint boundary conditions at the cone tip. The corresponding 
heat kernel is constructed by adding correction terms to the Friedrichs heat kernel.  In the setting of feasible incomplete edge metrics, arguments like those given by Mooers lead to the following result for the Friedrichs heat kernel.

\begin{thm}\cite{MazVer:ATM}\label{thm-Fried}
The rescaled heat kernel $\HF$ of $\DF$ lifts via the blowdown map $\beta$ to a polyhomogeneous distribution $\beta^*\HF$ on $\mathscr{M}^2_h$, with asymptotic expansion of leading order $(-1-\dim B)$ at the front face ff and leading order $(-m)$ at the diagonal face td, with index sets at the right and left boundary faces given by the indicial roots $\gamma = \nu_i + \frac{1}{2}, \, i \in \N_0,$ of the Hodge Laplacian.
\end{thm}

In fact the leading order of the heat kernel asymptotics at the front face corresponds to the homogeneity order $(-1-b)$
of the delta function on differential forms acting by \eqref{eqn:hk-on-functions2} on differential forms 
with respect to the rescaling $\Phi$.

Mooers' arguments \cite[Thm. 8.2]{Moo:HMC} can then be applied to give the following result for the heat kernel of $\DG$; see \cite[Thm. 8.4]{Ver:HTE} for more details.

\begin{thm}\label{thm-incomplete}
Consider non-logarithmic algebraic boundary conditions $\Gamma$, such that $b_{jj}=0$ for $i=1,\dots,(q'-1)\leq q$.
In particular we require $b_{jj}=0$ whenever $\nu_j=0$. The associated (rescaled) heat kernel $\HG$ 
may be written as a sum
$$\HG = \HF + \sum_{i,j = q'}^q E^{ij}_\Gamma,$$
where each summand $E^{ij}_\Gamma$ lifts via the blowdown map $\beta$ to a polyhomogeneous distribution 
$\beta^*E^{ij}_\Gamma$ on $\mathscr{M}^2_h$, with asymptotic expansion of leading order 
$(-1-\dim B + \nu_i + \nu_j)$ at the front face, leading orders $(-\nu_i +1/2)$ and $(-\nu_j + 1/2)$ 
at the left and right boundary faces, respectively, and vanishing to infinite order at $t\!f$ and $td$.
\end{thm}

Note that Mooers \cite[Thm. 8.2]{Moo:HMC} asserts that for $\nu_i, \nu_j \neq 0$, the correction terms $E_\Gamma^{ij}$ are elements of the space of polyhomogeneous distributions on $\mathscr{M}^2_h$ vanishing to infinite order at td and of leading order $-1 + \nu_i + \nu_j$ at ff.  See \cite{MazVer:ATM} for an explanation of the leading order given above in the presence of an edge $B^b$.  The leading orders of $E_\Gamma^{ij}$ at lf and rf are consequences of the boundary conditions $\Gamma$.  Mooers' Theorem 8.2 allows $\nu_i = 0$ or $\nu_j = 0$, in which case the correction terms $E_\Gamma^{ij}$ are not polyhomogeneous; we exclude these possibilities and consider only non-logarithmic algebraic boundary conditions.

We will also need a more refined result on the asymptotic behaviour of the Friedrichs 
heat kernel on functions at the right boundary face of $\mathscr{M}^2_h$. 
In the function setting the rescaling transformation $\Phi$ simply shifts the indicial roots and 
the heat kernel asymptotics at the right and left boundary faces of $\mathscr{M}^2_h$.
Hence for $p=0$ we do not need to rescale the operators and consider the heat kernel asymptotics
without the transformation $\Phi$.

\begin{prop}\label{heat-expansion}
Let $(M^m,g)$ be a feasible edge space.
The lift $\beta^*\HF$ is a polyhomogeneous distribution on $\mathscr{M}^2_h$; letting $s$ denote a defining function of rf, 
the expansion of $\beta^*\HF$ takes the form
$$
\beta^*\HF \sim s^0 a_{0}(\beta^*\HF) + O(s^{\gamma_0})
 \text{  as  } s \to 0
$$
for some $\gamma_0 >0$, where $a_{0}(\beta^* \HF)$ is constant in the first fibre variable $z$.
\end{prop}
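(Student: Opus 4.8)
The plan is to read off the claim about the leading coefficient $a_0$ from the indicial equation that it is forced to satisfy at the right face. The existence of the expansion, its leading exponent $s^0$, and the positivity of the remainder exponent are already supplied by Theorem \ref{thm-incomplete}: the lift $\beta^*H$ is polyhomogeneous on $\mathscr{M}^2_h$ and \emph{smooth} at $\textup{rf}$, so near $\textup{rf}$ it admits an expansion $\beta^*H \sim a_0 + O(s^{\gamma_0})$ with $\gamma_0>0$ (in fact with non-negative integer exponents and no logarithmic terms). What remains is therefore only to show that the restriction $a_0 = \beta^*H \restriction \textup{rf}$ is independent of the first fibre variable $z$.

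The key point is the geometric meaning of $\textup{rf}$ together with the equation that $H$ solves there. In the projective coordinates above, $\xi = x/\rho$ is a defining function of $\textup{rf}$, so $\textup{rf}$ is the face on which the first (output) point approaches the edge, $x\to 0$; moreover its interior is disjoint from the diagonal $D$, from the front face and from the temporal faces, so on a full neighbourhood of the interior of $\textup{rf}$ the kernel $H(t,(x,y,z),\widetilde{p})$ solves the \emph{homogeneous} heat equation $(\partial_t + \Delta)H = 0$ in the first variable, with $t>0$ and $\widetilde{p}=(\wx,\wy,\wz)$ held in the interior of $M$. First I would record the form of $\Delta$ near the edge: writing $\Delta_F$ for the Laplacian of the model fibre $(F,g^F)$ and using $g=g_0+h$ with $g_0 = dx^2 + x^2 g^F + \phi^*g^B$ and $|h|_{g_0}=O(x)$, one has
$$\Delta = \frac{1}{x^2}\Delta_F - \partial_x^2 - \frac{f}{x}\partial_x + \Delta_B + E,$$
where $\Delta_B$ is the lifted base Laplacian and $E$ collects the contributions of $h$ and of the submersion geometry. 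By the feasibility hypotheses---the Riemannian submersion condition and the vanishing of $g^F$ on horizontal vectors---together with $|h|_{g_0}=O(x)$, the operator $E$ is subordinate to the leading term, mapping bounded conormal functions into functions of order $O(x^{-1})$.

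Next I would substitute the expansion $\beta^*H \sim a_0 + a_{\gamma_0}x^{\gamma_0}+\cdots$ into $(\partial_t+\Delta)H=0$ and compare the most singular powers of $x$. Since $a_0$ is independent of $x$, the terms $\partial_t a_0$, $\Delta_B a_0$, $Ea_0$ and the $x$-derivative terms are all $O(x^{-1})$ or better, while $x^{-2}\Delta_F$ produces the term $x^{-2}\Delta_F a_0$ of order exactly $x^{-2}$; every other contribution, including that of the subleading coefficient $a_{\gamma_0}x^{\gamma_0}$, is of order $O(x^{\gamma_0-2})$ and hence strictly less singular because $\gamma_0>0$. Matching the coefficient of $x^{-2}$ in $(\partial_t+\Delta)H=0$ therefore forces the indicial identity $\Delta_F a_0 = 0$. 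Since each fibre $F$ is a closed connected manifold, $\ker\Delta_F$ consists precisely of the fibrewise constants, so $a_0$ is constant in $z$, as claimed; that $\gamma_0>0$ was already guaranteed by the smoothness at $\textup{rf}$.

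I expect the main obstacle to be the rigorous justification of this term-by-term computation, namely that one may apply $\Delta$ to the polyhomogeneous expansion and that the identity genuinely isolates $\Delta_F a_0$ at order $x^{-2}$. The first is legitimate by the conormality built into Definition \ref{phg} (derivatives along $\mathcal{V}_b$ preserve the index family), and the second requires checking that $h$ and the off-diagonal terms of the feasible metric perturb the leading $x^{-2}$ coefficient only at relative order $O(x)$, so that they cannot contaminate the indicial order. I would also note that the isospectrality hypothesis does not enter the present fibrewise-constancy conclusion directly; it is used upstream, to ensure the uniformity in $y\in B$ of the expansion underlying Theorem \ref{thm-incomplete}.
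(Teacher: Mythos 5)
Your argument is correct, but it takes a genuinely different route from the paper. The paper proves the proposition by re-tracing the parametrix construction of \cite{MazVer:ATM}: the initial parametrix $H_0$ is built from the heat kernel of the model cone $\mathscr{C}(F)\times\R^b$, whose leading coefficient at $\textup{rf}$ is fibrewise harmonic (hence constant), and one then checks that every subsequent correction ($J$ and the Neumann-series terms $H_1\circ P_1^k$) contributes only at order $O(s^{\gamma_0})$ at $\textup{rf}$, so the exact kernel inherits the constant leading coefficient. You instead take the polyhomogeneity at $\textup{rf}$ as a black box and extract the fibrewise constancy from the indicial equation: on the interior of $\textup{rf}$ (which indeed meets neither $\textup{ff}$, $\textup{tf}$, $\textup{td}$ nor $\textup{lf}$, so $(\partial_t+\Delta_p)H=0$ holds there with $t>0$ and $\widetilde p$ interior), multiplying by $x^2$ turns $\Delta$ into a b-operator $-(x\partial_x)^2+(1-f)x\partial_x+\Delta_F+O(x)$, term-by-term differentiation of the expansion is licensed by conormality, and matching the $x^0$ coefficient gives $\Delta_F a_0=0$, whence $a_0$ is constant on (connected) fibres; continuity of $a_0$ up to the corners of $\textup{rf}$ propagates this to all of $\textup{rf}$. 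This is cleaner and independent of the details of the construction. The one caveat is that your input from Theorem \ref{thm-incomplete} --- that the expansion at $\textup{rf}$ has leading exponent exactly $0$, carries no $s^0(\log s)^k$ terms, and has a positive gap $\gamma_0$ to the next exponent --- is stated there only loosely as ``smooth at the right face'' (the true exponents are non-integer indicial roots), and in the paper this structural information is itself only pinned down by the very parametrix analysis you bypass; your proof therefore shifts, rather than removes, the reliance on that construction. Also note that your parenthetical ``non-negative integer exponents'' overstates the regularity, though nothing in your argument uses it. What the paper's route buys in exchange for its length is the explicit index set at $\textup{rf}$ (exponents from $E^*$ and integers $\geq 2$, with $\gamma_0=\min\{2,\min E^*\}$), which your argument does not produce but which the proposition as stated does not require.
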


\begin{proof}
We follow the heat kernel construction in \cite{MazVer:ATM}.  Note that $\Delta = \DF$ throughout the proof.
The initial approximate parametrix for the solution operator of $\mathcal{L}=(\partial_t+\Delta)$ 
is constructed by solving the heat equation to 
first order at the front face ff of $\mathscr{M}^2_h$. 
We actually work with the operator $t\mathcal{L}$ rather than just $\mathcal{L}$, as $t\mathcal{L}$ lifts to an operator which is smooth at ff.  Since $\mathcal{L}\HF = 0$ implies $t\mathcal{L}\HF = 0$ and so forth, this change does not affect the outcome of the construction.
The restriction of the lift $\beta^*(t\mathcal{L})$ to ff is called the normal operator 
$N_{\ff}(t\mathcal{L})$ at the front face and feasibility of $g$, more precisely Definition 
\ref{def-feasible} (i), ensures that $N_{\ff}(t\mathcal{L})$ is given in projective coordinates $(\tau,s,u,z,\wx,\wy,\wz)$ by
\[
\beta^*(t\mathcal{L}) |_{\ff} = N_{\ff}(t\mathcal{L})=\tau (\partial_{\tau} - \partial_s^2 - fs^{-1}\partial_s +s^{-2}\Delta_{F,z} + \Delta^{\R^b}_u) \\
=:\tau (\partial_{\tau} + \Delta^{\mathscr{C}(F)}_s + \Delta^{\R^b}_u).
\]
Here $\mathscr{C}(F)=\R^+_s\times F_z$ denotes the model cone.  Since $N_{\ff}(t\mathcal{L})$ 
does not involve derivatives with respect to $(\wx, \wy,\wz)$, it acts 
tangentially to the fibres of the front face. Searching for an initial parametrix $H_0$, we solve 
the heat equation to first order at the front face; we want 
\[
N_{\ff}(t\mathcal{L}\circ H_0)= N_{\ff}(t\mathcal{L}) \circ N_{\ff}(H_0) = 0,
\]
which is the heat equation on the model edge $\mathscr{C}(F)\times \R^{b}_{u}$.
Consequently, the initial parametrix $H_0$ is defined by choosing $N_{\ff}(H_0)$ to 
equal the fundamental solution for the heat operator $N_{\ff}(t\mathcal{L})$, and extending 
$N_{\ff}(H_0)$ trivially to a neighbourhood of the front face.
Using the projective coordinates $(\tau,s,u,z,\wx,\wy,\wz)$ near $\ff$ we have
\begin{equation}\label{normal-heat-ops}
N_{\ff}(H_0) := H^{\mathscr{C}(F)}(\tau,s,z,1,\wz) H^{\R^b}(\tau, u ,0),
\end{equation}
where $H^{\R^b}$ denotes the euclidean heat kernel on $\R^b$ and $H^{\mathscr{C}(F)}$ is the heat kernel 
for the model cone $\mathscr{C}(F)$, as studied in \cite{Che:SGS}, \cite{Les:OFT} and \cite{Moo:HMC}. 
Thus, the index set $E \subset [0, \infty) \times \{0\}$ for the asymptotic behaviour of $H_0$ at the right and left boundary faces 
is given in terms of the indicial roots $\gamma$ for the Laplacian $\Delta^{\mathscr{C}(F)}_s$ on the exact cone, i.e.,
\begin{align*}
H_0 \sim \sum_{\gamma \in E} s^{\gamma} a_{\gamma}(H_0) \text{  as  } s\to 0,
\end{align*}
with the leading coefficient $a_0(H_0)$ being harmonic on fibres and hence constant in $z$. 
Feasibility of $g$, more precisely Definition \ref{def-feasible} (ii), ensures that the indicial 
roots are independent of the base point $b\in Y$; hence the index set $E$ is discrete and 
$H_0$ is indeed polyhomogeneous on $\mathscr{M}^2_h$.
The error of the initial parametrix $H_0$ is given by 
\begin{align*}
\beta^*(t\mathcal{L})H_0=\left[\beta^*(t\Delta_g) - \tau \Delta^{\mathscr{C}(F)}_s - \tau \Delta^{\R^b}_u\right]H_0=:P_0.
\end{align*}
The asymptotic expansion of $H_0$ as $s\to 0$ starts with 
$s^0a_0$ with no $s^0(\log s)^k, k\geq 1$ terms. The leading term $s^0a_0$ is annihilated by 
the edge vector fields $s\partial_s$ and $\partial_z$ since $a_0$ is constant in $z$, and its order 
is raised by $s\partial_u$. Consequently, since $\left[\beta^*(t\Delta_g) - \tau \Delta^{\mathscr{C}(F)}_s - \tau \Delta^{\R^b}_u\right]$ 
is of higher order in $s$ than $N_{\ff}(t\mathcal{L})$, we find
\begin{align*}
P_0 \sim \sum_{l=0}^{\infty}s^l a_{l}(P_0) + 
\sum_{\gamma \in E^*} \sum_{l=0}^{\infty}
s^{\gamma-1+l} a_{\gamma,l}(P_0) \text{  as  }  s\to 0,
\end{align*}
where $E^*:=E\backslash\{0\}$.

The next step in the construction of the heat kernel involves adding a kernel $J$ to $H_0$, 
such that the new error term is vanishing to infinite order at rf. 
In order to eliminate the term $s^\gamma a_\gamma$ in the asymptotic 
expansion of $P_0$ at rf, we only need to solve 
\begin{align}\label{indicial}
(-\partial_s^2- fs^{-1}\partial_s + s^{-2}\Delta_{F,z}) \omega = s^\gamma (\tau^{-1}a_\gamma).
\end{align}
This is because all other terms in the expansion of $t\mathcal{L}$ at rf lower the 
exponent in $s$ by at most one, while the indicial part lowers the exponent by two. 
The variables $(\tau, u, \wx, \wy, \wz)$ enter the equation only as parameters. 
The equation on the model cone is solved via the Mellin tranform and the solution $\omega$ is polyhomogeneous in 
all variables including parameters, and is of leading order $(\gamma+2)$. 
Hence $H_1=H_0+J$ expands near rf as
$$
H_1 \sim s^0 a_0(H_1) + \sum_{l=2}^{\infty}s^la_l(H_1) + 
\sum_{\gamma \in E^*} \sum_{l=0}^{\infty}
s^{\gamma+l} a_{\gamma,l}(H_1) \text{  as  } s\to 0,
$$
where $a_{0}(H_1)\equiv a_0(H_0)$ is constant in $z$.

In the following correction steps the exact heat kernel is 
obtained from $H_1$ first by improving the error near td and then by an iterative correction procedure, 
adding terms of the form $H_1\circ (P_1)^k$, where $P_1:=t\mathcal{L}H_1$ 
is vanishing to infinite order at rf and td. 
Since the leading coefficient $a_{0}(H_1)$ is constant in $z$, we deduce $\partial_z H_1 \sim O(s^{\gamma_0})$ as $s\to 0$, 
where $\gamma_0:=\min \{2, \gamma \in E^*\}$. 
Consequently, we also have $\partial_z H_1\circ (P_1)^k \sim O(s^{\gamma_0})$ as $s\to 0$. We find

\[
\beta^* \HF \sim s^0 a_{0}(\beta^* \HF) + \sum_{l=2}^{\infty} s^la_l(\beta^* \HF)+
\sum_{\gamma \in E^*} \sum_{l=0}^{\infty}s^{\gamma+l} a_{\gamma,l}(\beta^* \HF)
\text{  as  } s \to 0,
\]
where the leading coefficient $a_{0}(\beta^* \HF)\equiv a_0(H_0)$ is still constant in $z$.
\end{proof}

\subsection{Heat kernel on spaces with a complete edge metric} 

We next describe the blowup space and asymptotic structure for the heat kernel of the Laplacian of a complete edge metric.  The blowup proceeds exactly as in \S \ref{subsec:kernel_incomplete} except that in the first step we blow up the heat space $M^2_h$ along the submanifold
\[ A = \{ (t, (0,y,z), (0, \wy, \wz))\in \R^+ \times (\Mdel)^2: y= \wy \} \subset M^2_h. \]
 The final heat-space blowup $\mathscr{M}^2_h$ for a complete edge metric is depicted in Figure \ref{fig:heat-complete}.

\begin{figure}[h]
\begin{center}
\begin{tikzpicture}

\draw[very thick] (-0.7,-0.5) -- (-2,-1);
\draw[very thick] (0.7,-0.5) -- (2,-1);

\draw[very thick] (0.7,-0.5) -- (0.7,1.8);
\draw[very thick] (-0.7,-0.5) -- (-0.7,1.8);

\draw[very thick] (-0.7,-0.5) .. controls (-0.5,-0.6) and (-0.4,-0.7) .. (-0.3,-0.7);
\draw[very thick] (0.7,-0.5) .. controls (0.5,-0.6) and (0.4,-0.7) .. (0.3,-0.7);

\draw[very thick] (-0.3,-0.7) .. controls (-0.3,-0.3) and (0.3,-0.3) .. (0.3,-0.7);
\draw[very thick] (-0.3,-1.4) .. controls (-0.3,-1) and (0.3,-1) .. (0.3,-1.4);

\draw[very thick] (0.3,-0.7) -- (0.3,-1.4);
\draw[very thick] (-0.3,-0.7) -- (-0.3,-1.4);

\node at (1.2,0.7) {\large{rf}};
\node at (-1.2,0.7) {\large{lf}};
\node at (1.1, -1.2) {\large{tf}};
\node at (-1.1, -1.2) {\large{tf}};
\node at (0, -1.8) {\large{td}};
\node at (0,0.3) {\large{ff}};
\end{tikzpicture}
\end{center}
\caption{Heat-space blowup $\mathscr{M}^2_h$ for complete edge metrics}
\label{fig:heat-complete}
\end{figure}
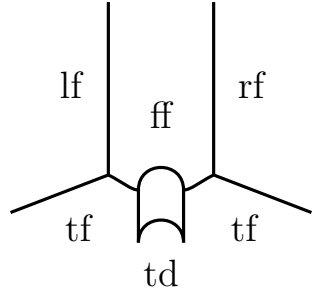\bigskip

We now describe some coordinates on $\mathscr{M}^2_h$.  Near the left hand corner where the ff, tf and lf meet we may use projective coordinates for $\mathscr{M}^2_h$, valid away from $x = 0$:

\begin{align}\label{left-coord}
\tau=\sqrt{t}, \  s=\frac{\wx}{x}, \ u=\frac{y-\wy}{x}, \ x, \ y, \ z, \ \wz.
\end{align}
Observe that in these coordinates, $s$ and $x$ are defining functions for lf and ff, respectively.
Near the top of the blown up diagonal, where td and ff meet, and away from tf we may use the coordinates
\begin{align}\label{diag-coord}
\eta=\sqrt{t}, \  S=\frac{x-\wx}{x \sqrt{t} }, \ U=\frac{y-\wy}{x\sqrt{t}}, \ Z = \frac{z-\wz}{\sqrt{t}}, \ x, \ y, \ z.
\end{align}
In these coordinates, $\eta$ is a defining function for td and $x$ is a defining function for ff.
By abuse of notation, we will denote the blowdown map with respect to any coordinates by $\beta$, and will denote the composition of $\beta$ with projections onto the left and right factors in $M^2_h$ by $\beta_L: \mathscr{M}^2_h\to \bR^+ \times M$ and $\beta_R: \mathscr{M}^2_h\to  M$, respectively.

The asymptotic structure of the heat kernel of the Laplacian of a complete edge metric,
as a section of ${}^e \Lambda^p M \boxtimes {}^e \Lambda^p M$ was described by Pierre Albin in \cite{Alb:RIT}.  
\begin{thm}\cite{Alb:RIT} \label{thm:Albin-htkrnl}
The heat kernel $H$ lifts via the blowdown map $\beta$ to a polyhomogeneous distribution $\beta^*H$ on $\mathscr{M}^2_h$ with asymptotic expansion smooth up to the boundary of  the front face ff, with leading order $(-\dim M)$ at the temporal diagonal face td, and vanishing to infinite order at left, right and temporal boundary faces, lf, rf, tf, respectively.
\end{thm}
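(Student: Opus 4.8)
The plan is to reprove the statement by a geometric microlocal parametrix construction on $\mathscr{M}^2_h$, running in complete parallel to the incomplete case of Proposition \ref{heat-expansion} but with the model operators dictated by the complete edge geometry. As there, one lifts the operator $\mathcal{L}=\partial_t+\Delta$ to the blowup, computes the normal operators of $t\mathcal{L}$ at the two faces carrying nontrivial models, namely the temporal diagonal td and the front face ff, solves the heat equation to leading order at each, and then removes the resulting errors by an iterative correction scheme whose convergence is controlled by a composition formula for the associated operator calculus.

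First I would identify the two relevant normal operators. At td, in the coordinates \eqref{diag-coord}, the parabolic rescaling carries the leading part of $t\mathcal{L}$ to the flat Euclidean heat operator on $\R^{\dim M}$; its fundamental solution is the Gaussian, which is smooth away from $\eta=0$, simultaneously encodes the delta initial condition at the diagonal, vanishes to infinite order as $|(S,U,Z)|\to\infty$, and carries the factor $\eta^{-\dim M}$. Since $\eta=\sqrt{t}$ is a defining function of td, this already produces the leading order $(-\dim M)$ there and the infinite-order vanishing at the temporal face tf, which is precisely the face $|(S,U,Z)|\to\infty$. At ff, in the coordinates \eqref{left-coord}, feasibility (Definition \ref{def-feasible} (i)) guarantees that the normal operator $N_{\ff}(t\mathcal{L})$ is, up to rescaling, the heat operator of the Riemannian product $\bH^{b+1}\times F$: the hyperbolic factor comes from the $(x,y)$ directions carrying the frozen metric $(dx^2+|dy|^2)/x^2$, while the fibre factor carries $g^F$. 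Its fundamental solution is the product $H^{\bH^{b+1}}\,H^F$ of the hyperbolic and fibre heat kernels.

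The initial parametrix $H_0$ is then defined by setting $N_{\ff}(H_0)$ equal to this product and $N_{\textup{td}}(H_0)$ equal to the Gaussian, extending each trivially off its face and patching them together. One computes the error $P_0=t\mathcal{L}H_0$, which by construction vanishes to positive order at both ff and td, improves it first near td, and then iterates by adding terms of the form $H_0\circ (P_0)^k$. The exact kernel is the resulting Neumann sum, and the essential structural input is a composition theorem --- proved by a triple-space construction together with the Melrose pushforward theorem for polyhomogeneous distributions --- asserting that this calculus is closed and that the sum is polyhomogeneous on $\mathscr{M}^2_h$. Since the hyperbolic heat kernel is smooth on $\bH^{b+1}$ and $H^F$ is smooth on the closed fibre, the expansion at ff carries no indicial-root terms and the kernel is in fact \emph{smooth} up to ff; this is where the complete case diverges most sharply from the conical model of the incomplete case, in which the indicial roots of $\Delta^{\mathscr{C}(F)}_s$ force a genuine polyhomogeneous expansion and an extra Mellin-transform correction step.

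The main obstacle, and the feature that genuinely distinguishes the complete from the incomplete situation, is the infinite-order vanishing at the left and right faces lf and rf. Because the metric is complete the edge sits at infinite distance, and lf, rf correspond to the regimes $s=\wx/x\to 0$ and its reciprocal $\to 0$, in which the hyperbolic distance between the two spatial arguments in the model $\bH^{b+1}\times F$ tends to infinity. The sharp Gaussian off-diagonal bound $e^{-d^2/4t}$ for the hyperbolic heat kernel then forces decay faster than any power of the defining functions $s$ and $1/s$. The real work is to establish this Gaussian decay for the model, to verify that it is inherited by $H_0$, and --- crucially --- to check that the corrections $H_0\circ (P_0)^k$ preserve it, so that the \emph{exact} kernel, not merely the leading parametrix, vanishes to infinite order at lf, rf and tf. This stability of rapid decay under composition, together with the convergence of the Neumann series in the polyhomogeneous topology, is where the bulk of the estimation lies.
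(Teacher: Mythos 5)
Your outline is mathematically reasonable, but it takes a genuinely different route from the paper, which does not reprove this theorem at all: the statement is quoted from \cite{Alb:RIT}, and the only original content of the surrounding subsection is the conversion of Albin's half-density asymptotics into asymptotics for the kernel acting on functions against the measure $\wx^{-b-1}\,d\wx\,\dvb(\wx)$ --- a pullback computation of density factors in the coordinates \eqref{diag-coord} that accounts for the shift in the orders at ff and td relative to \cite{Alb:RIT}. You skip that conversion entirely and instead sketch the construction underlying Albin's result: the Euclidean model at td, the product model $\bH^{b+1}\times F$ at ff forced by the Riemannian submersion condition, smoothness at ff because the front-face model is a nondegenerate parabolic problem with no indicial roots (in contrast to the conic model driving Proposition \ref{heat-expansion}), and infinite-order vanishing at lf, rf, tf from the Gaussian off-diagonal decay of the hyperbolic heat kernel, with $d\approx -\log s$ so that $e^{-d^2/4t}$ beats every power of the defining function $s$ in \eqref{left-coord}. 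This is a faithful roadmap of the cited proof and it correctly locates the analytic weight of the argument; but the two pillars it rests on --- the composition theorem for the edge heat calculus (triple-space construction plus the pushforward theorem) and the stability of the rapid decay at lf, rf, tf under the Volterra iteration --- are named rather than established, and they are precisely the technical core of \cite{Alb:RIT}. What the paper's route buys is brevity and a clean separation of concerns, at the cost of careful bookkeeping of half-density factors (exactly where discrepancies in powers of defining functions tend to arise, as the paper itself notes); what your route would buy is a self-contained treatment in which the function-level normalization --- leading order $0$ at ff and $-\dim M$ at td with respect to $dS\,dU\,dZ$ --- emerges directly, with no density conversion needed.
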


The construction of the heat kernel in \cite{Alb:RIT} is parallel for the Hodge Laplacian on functions and on differential forms.
The asymptotics remain the same for any degree
when the heat kernel is viewed as a section of ${}^e \Lambda^p M \boxtimes {}^e \Lambda^p M$.  In fact, the leading order of the heat kernel asymptotics at the front face corresponds to the homogeneity order $(-m)$
of the delta function on differential forms acting by \eqref{eqn:hk-on-functions2} on differential forms.

Note that \cite{Alb:RIT} works in the context of the heat operator acting on half-densities. We dispense with half-densities and regard the heat kernel as acting on differential forms,
which accounts for the difference in powers of the defining functions between Theorem \ref{thm:Albin-htkrnl}
and \cite{Alb:RIT}. We sketch how to apply Albin's result to get the asymptotics of the heat kernel as given above.

Differential forms on $M,\R^+ \times M$ and $\R^+ \times M^2$, 
smooth under the form bundle trivialization \eqref{triv}, may be identified with half-densities 
as follows \footnote{These densities have a factor of $\sqrt{\det g}$ in appropriate places.  In what follows we omit smooth factors.}
\begin{align*}
 \w (t,(x,y,z),(\wx,\wy,\wz)) &\leftrightarrow \w (t, (x,y,z),(\wx,\wy,\wz)) x^{-\frac{(b+1)}{2}} (\wx)^{-\frac{(b+1)}{2}} |dx \,dy \,dz \,d\wx \,d\wy \,d\wz \,dt|^{1/2}, \\
\w (t, (x,y,z)) &\leftrightarrow \w (t, (x,y,z)) x^{-\frac{(b+1)}{2}} |dx \,dy \,dz \,dt|^{1/2}, \\
  \w (x,y,z) &\leftrightarrow \w (x,y,z) x^{-\frac{(b+1)}{2}} |dx \,dy \,dz|^{1/2}. 
\end{align*}

As discussed in \cite{Alb:RIT}, an element of the heat calculus $A \in \Psi^{2,0}_{e, Heat}$ has an integral kernel that may be written as $\rho_{td}^{-\frac{(b+f)}{2}} \rho_{\textup{ff}}^{-\frac{(b+1)}{2}} k \cdot \nu$, where $\rho_{\text{face}}$ is a defining function for the specified face; $k$ is a function that vanishes to infinite order at lf, rf, and tf, and is smooth up to the boundary at td and ff; and $\nu$ is a smooth section of $\Omega^{1/2}(\mathscr{M}^2_h)$.
Such an operator $A \in \Psi^{2,0}_{e, Heat}$ acts on half-densities by
\begin{align} \label{eqn:actionhalf} 
A \; \w(t,(x,y,z)) & x^{-\frac{(b+1)}{2}}   |dx \, dy \,dz \,dt|^{1/2} \nonumber \\
&= (\beta_L)_*\left( \rho_{td}^{-\frac{(b+f)}{2}} \rho_{\textup{ff}}^{-\frac{(b+1)}{2}} k \nu \cdot (\beta_R)^*( \w (\wx,\wy,\wz) (\wx)^{-\frac{(b+1)}{2}} |d\wx \,d\wy \,d\wz|^{1/2} ) \right). 
\end{align}

To relate these half-densities, let us work in coordinates \eqref{diag-coord}.  We may take $\nu = |dS \,dU \,dZ \,dx \,dy \,dz \,d\eta|^{1/2}$.  Pulling back our standard half-density on $M^2 \times \bR^+$, we find
\begin{align} 
\beta^* &( x^{-\frac{(b+1)}{2}} (\wx)^{-\frac{(b+1)}{2}} |dx \,dy \,dz \,d\wx \,d\wy \,d\wz \,dt|^{1/2} ) \nonumber \\ 
&= x^{-\frac{(b+1)}{2}} (x(1-\eta S))^{-\frac{(b+1)}{2}}(-1)^{\frac{b+f+1}{2}} \sqrt{2} x^{\frac{b+1}{2}} \eta^{\frac{b+f+2}{2}} |dS \, dU \, dZ \,dx \,dy \,dz \,d\eta|^{1/2} \nonumber \\ 
&= \sqrt{2} (-1)^{\frac{m}{2}}(1-\eta S)^{-\frac{(b+1)}{2}} x^{-\frac{(b+1)}{2}} \eta^{\frac{b+f+2}{2}} \nu. \label{eqn:pullback_half_den}
\end{align}
Since $1 - \eta S = \tilde{x}/x$ and we are working near the intersection of td and ff, the factor $\sqrt{2} (-1)^{\frac{m}{2}}(1-\eta S)^{-\frac{(b+1)}{2}}$ is smooth and uniformly bounded, so we absorb it into $k$.  
Denote $\sigma_R := \beta_R^* ( (\wx)^{-\frac{(b+1)}{2}} |d\wx \,d\wy \,d\wz|^{1/2} )$.  Using \eqref{eqn:actionhalf} and \eqref{eqn:pullback_half_den} we compute
\begin{align*}
& [A \; \w](t,(x,y,z)) x^{-\frac{(b+1)}{2}} |dx \,dy \,dz \,dt|^{1/2} \nonumber \\
&= (\beta_L)_*\left( \eta^{-\frac{(b+f)}{2}} x^{-\frac{(b+1)}{2}} k \nu \cdot (\beta_R)^*( \w (\wx,\wy,\wz) (\wx)^{-\frac{(b+1)}{2}} |d\wx \,d\wy \,d\wz|^{1/2} ) \right) \\
&= (\beta_L)_*\left( \eta^{-\frac{(b+f)}{2}} x^{-\frac{(b+1)}{2}} k(\eta,(S,U,Z),(x,y,z))  \right. \\ 
& \hspace{ 1 in} \left. \cdot x^{\frac{(b+1)}{2}} \eta^{-\frac{(b+f+2)}{2}} \beta^*( x^{-\frac{(b+1)}{2}} (\wx)^{-\frac{(b+1)}{2}} |dx \,dy \,dz \,d\wx \,d\wy \,d\wz \,dt|^{1/2} )  (\beta_R^* \w) \sigma_R \right) \\
&= (\beta_L)_*\left( \eta^{-m} k(\eta, (S,U,Z),(x,y,z)) (\beta_R^* \w)  \right. \\ 
& \hspace{ 1 in} \left. \cdot \, \beta^*( x^{-\frac{(b+1)}{2}} (\wx)^{-\frac{(b+1)}{2}} |dx \,dy \,dz \,d\wx \,d\wy \,d\wz \,dt|^{1/2} ) \sigma_R \right) \\
&= \left[ \int \eta^{-m}k\left( t^{1/2}, \frac{x-\wx}{x t^{1/2}}, \frac{y-\wy}{x t^{1/2}}, \frac{z-\wz}{t^{1/2}}, x, y, z\right) \w(\wx,\wy,\wz) \wx^{-(b+1)} d\wx d\wy d\wz \right]  \\
& \hspace{ 1 in} \cdot x^{-\frac{(b+1)}{2}} |dx dy dz dt|^{1/2}. 
\end{align*}
Cancelling the half density from the first and last components of this equation, we see that the heat kernel $\beta^* H$ may be expressed as $\eta^{-m}k$, which has the stated asymptotics.

\section{Mapping properties of the heat kernel on edge manifolds}\label{s-mapping}

\subsection{Estimates on spaces with incomplete edge metrics} \label{s:est-incomplete}

We prove Theorem \ref{thm:main:incomplete}.  The main task in the proof is to check the estimates, as the continuity of $e^{-t \Delta_{\Gamma}} f$ and its derivatives are easy to obtain by the dominated convergence theorem in each of the coordinate systems that will be discussed.  The estimates in Theorem \ref{thm:main:incomplete} are classical for the lift of the heat operator to $\mathcal{M}_h^2$ supported away from the front face. So we can assume without loss of generality that the lift of the heat kernel to the heat space blowup $\mathscr{M}^2_h$ is compactly supported in an open neighbourhood of the front face. Then the heat operator acts as in \eqref{eqn:hk-on-functions} and the proof amounts to estimating $(k+n)$ edge derivatives $V_e^{k+n}$ locally.

Consider a local coordinate chart $\mathscr{U}_{(x,y,z)}$ and recall the bigrading of $\Lambda^p(\mathscr{U}_{x})$ given in \eqref{eqn:bigrading}.  For $k \in \N$, 
define the local spaces of continuous $k$-times edge-differentiable functions
\begin{align*}
C^k_e(\mathscr{U}, \Lambda^p T^*\mathscr{U})^\Phi:= 
C^k_e \left(\mathscr{U}, \bigoplus_{l+k=p} \Lambda^{\ell,k-1}(\mathscr{U}) \otimes  \Lambda^{\ell,k}(\mathscr{U})
\right).
\end{align*}
More precisely, consider a frame $\{\theta_1,\dots,\theta_{j_p}\}$ of $\bigoplus_{l+k=p} \Lambda^{\ell,k-1}(\mathscr{U}) \otimes  \Lambda^{\ell,k}(\mathscr{U})$, orthonormal with respect to $dy^2 + dz^2$. Consider $\w= \sum \w_i \theta_i$ with $\w_i \in C(\overline{M})$. Then we say that $\w \in C^k_e(\mathscr{U}, \Lambda^p T^*\mathscr{U})^\Phi$ if, for any choice of edge vector fields $V_j \in \mathcal{V}, j \leq k$, each $V_1\dots V_j \, \w_i\in C(\overline{M})$.
We write 
$$V_1 \dots V_j \, \w := \sum_{i=1}^{j_p} (V_1 \dots V_j\w_i) \, \theta_i.$$
We set
$\|\w\|= \sup_{i} \|\w_i\|_\infty,$
and define a norm on $C^k_e(\mathscr{U}, \Lambda^p T^*\mathscr{U})^\Phi$ by
\begin{align}\label{norm-k}
\| \w \|_k = \| \w \| + \sum_{j \leq k}  \| V_1 \dots V_j \w \|.
\end{align}
Though a priori this norm is not coordinate invariant, coordinate changes lead to equivalent norms.
The H\"older space $C^{\A}_e(\mathscr{U}, \Lambda^p T^*\mathscr{U})^\Phi, \A\in (0,1),$ 
 consists of $\w= \sum \w_i \theta_i$ with $\w_i \in C(\overline{M})$ such that the H\"older norm
\begin{align}\label{norm-a}
\| \w \|_\A = \| \w \| + 
\sup_{i\in \{1,\dots,j_p\}}  \left\|\frac{\w_i(q) -  \w_i(q')}{d(q,q')^\A}\right\|_{\infty, (q,q')\in \overline{M}^2},
\end{align}
is finite, where $d(q,q')$ represents the distance with respect to the incomplete edge metric $g$.
Continuity with respect to the topology on $M$ induced by the incomplete edge metric $g$ 
defines spaces $\mathscr{C}^{*}_e(\mathscr{U}, \Lambda^p T^*\mathscr{U})^\Phi$ in an analogous way,
as Banach subspaces of $C^{*}_e(\mathscr{U}, \Lambda^p T^*\mathscr{U})^\Phi$.

The local rescaling $\Phi$ is, by construction of the norms, an isometry between 
either $C^{k}_e(\mathscr{U}, \Lambda^p T^*\mathscr{U})^\Phi$ and $x^{-f/2}C^k_e(M,{}^{ie}\Lambda^p M)$, or 
between $C^{\A}_e(\mathscr{U}, \Lambda^p T^*\mathscr{U})^\Phi$ and $x^{-f/2}C^\A_e(M,{}^{ie}\Lambda^p M)$.
Consequently, we may establish local estimates for the rescaled heat kernel with respect to the 
ambient local norms \eqref{norm-k} and \eqref{norm-a}. We want to establish boundedness of the rescaled heat operator 
with respect to weighted spaces  $x^{\beta}C^k_e(\mathscr{U}, \Lambda^p T^*\mathscr{U})^\Phi$ and $x^{\beta}C^\A_e(\mathscr{U}, \Lambda^p T^*\mathscr{U})^\Phi$, so we study the integrals 

\begin{eqnarray}
\nonumber && \int_{\mathscr{U}} V_e^{k+n} x^{-\beta} \HG\left( t, (x,y,z), (\wx, \wy, \wz) \right) \wx^{\beta}  \w(\wx,\wy,\wz) d\wx \, d\wy \,d\wz  \\ 
&&= \int_{\mathscr{U}} V_e^{k+n} (x/\wx)^{-\beta} \HG\left( t, (x,y,z), (\wx, \wy, \wz) \right) \w(\wx,\wy,\wz)  d\wx \dvb(\wx), \label{VH-incomp}
\end{eqnarray}
with either $\w \in C^k_e(\mathscr{U}, \Lambda^p T^*\mathscr{U})^\Phi$ or $\w \in C^\A_e(\mathscr{U}, \Lambda^p T^*\mathscr{U})^\Phi$.
For non-Friedrichs boundary conditions $\Gamma$, we specify the weight range by $\beta \in (-3/2 + \nu_q, 1/2 -\nu_q]$ to ensure integrability of the expressions in \eqref{VH-incomp}. If $\Gamma = \mathscr{F}$, we specify the range of weights by $\beta \in (-3/2 - \nu_{\min}, 1/2 +\nu_{\min}]$, where $\nu_{\min} \geq 0$ is the smallest number such that $(\nu_{\min} +1/2)$ is an indicial root of $\Delta$.
Note that if $\nu_{\min} \in [0,1)$, then $\nu_{\min} =\nu_0$.
These weight ranges differ from those given in Theorem \ref{thm:main:incomplete} by $-\frac{f}{2}$ since we are working in the rescaled setting.

The estimates require a precise understanding of the asymptotic behaviour of $\HG$. Since the asymptotics of the heat kernel $\HG$ near the front face are non-uniform, one needs to estimate the integral \eqref{VH-incomp} for the integrand supported near the various corners and boundary faces of the heat space blowup $\mathscr{M}^2_h$. We thus separate $\HG$ into four components, with their lifts to the heat space blowup $\mathscr{M}^2_h$ compactly supported near each of the three corners of the front face and near the diagonal of  $\mathscr{M}^2_h$. This corresponds to different asymptotic regimes as the variables $(t,x,\wx)$ approach zero, with the associated projective coordinates from 
\S \ref{subsec:kernel_incomplete} capturing the various asymptotic behaviour. We establish the estimates for the integral operators defined by each of the components separately. \bigskip

\underline{\emph{Estimates near the lower left corner of the front face}} \ \bigskip

We assume that the heat kernel $\HG$ is compactly supported near the lower left corner of the front face. Its asymptotic behaviour is appropriately described in the projective coordinates
\[
\tau=\frac{t}{x^2}, \ s=\frac{\wx}{x}, \ u=\frac{y-\widetilde{y}}{x}, \ x, \ y, \ z, \ \widetilde{z},
\]
where $\tau, s$, and $x$ are the defining functions of tf, lf and ff, respectively. The coordinates are valid whenever $(\tau, s, u)$ are bounded as $(t,x,\wx)$ approach zero. The edge vector fields obey the following transformation rules:
\[
\beta^*(x\partial_x)=-2 \tau \partial_{\tau} - s \partial_s - u \partial_u + x\partial_x, \ \beta^*(x\partial_y)=\partial_u + x \partial_y, \ \beta^*(\partial_z)=\partial_z.
\]
Note that we abuse notation by writing $x \partial_x, x \partial_y$ vector fields on the right side of these expressions.  By Theorems \ref{thm-Fried} and \ref{thm-incomplete} we infer that for any $n\in \N_0$, 
\begin{align*}
\beta^*(V_e^{k+n} \HF)(\tau, x, y, z, s, u, \wz)&=x^{-1-b} s^{\nu_{\min}+1/2} G(\tau, x, y, z, s, u, \wz), \\
\beta^*(V_e^{k+n} E^{ij}_\Gamma)(\tau, x, y, z, s, u, \wz)&=x^{-1-b+\nu_i + \nu_j} s^{-\nu_i+1/2} G'(\tau, x, y, z, s, u, \wz),
\end{align*}
where $G,G'$ are bounded in their entries and infinitely vanishing as $\tau$ goes to zero, with the same compact support as $\beta^*\HG$. For the transformation rule of the volume form we compute

\[
\beta^*(d\wx \dvb(\wx))=h \cdot x^{1+b} ds\, du\, d\wz,
\]
where $h$ is a bounded distribution on $\mathscr{M}_h^2$.
Hence we can write for any $\w \in C^k_e(\mathscr{U}, \Lambda^p T^*\mathscr{U})^\Phi$
\begin{equation}
\label{lf-est}
\begin{split}
\int V_e^{k+n} (x/\wx)^{-\beta} \HF \w \, d\wx \, d\wy \, d\wz 
&=\int s^{\nu_{\min}+1/2 +\beta} G \w ds \, du \, d\wz, \\
\int V_e^{k+n} (x/\wx)^{-\beta} E^{ij}_\Gamma \w \, d\wx \, d\wy \, d\wz 
&=\int s^{1/2 +\beta-\nu_i} x^{\nu_i+\nu_j} G' \w  ds \, du \, d\wz.
\end{split}
\end{equation}
By the choice of the range for the weight $\beta$, the expressions above are integrable and we find
\[
\| \HG (x^\beta \w) \|_{x^{\beta}C^{k+n}_e} \leq C\, \| \w \| \leq 
C\, \| x^\beta \w \|_{x^{\beta}C^{k}_e}.
\]
Note that this estimate also accounts for the second statement of the theorem, since 
for $\w \in C^\A_e(\mathscr{U}, \Lambda^p T^*\mathscr{U})^\Phi$ we have 
$\|\w\| \leq \|\w \|_\A$. 

\bigskip

\underline{\emph{Estimates near the lower right corner of the front face.}} \ \bigskip

Applying our argument for the left corner of the front face near the right corner appears to produce singular behaviour in powers of the defining function for the front face.  However, the argument is not symmetric in the $(x,y,z)$ versus $(\wx, \wy, \wz)$ variables as these variables play different roles in the integration \eqref{VH-incomp}. Thus we may use a second change of variables to take advantage of the infinite order decay at the temporal face. To begin, we use the projective coordinates
\[
\wtau=\frac{t}{\wx^2}, \ \ws=\frac{x}{\wx}, \ \wu=\frac{y-\widetilde{y}}{\wx}, \ z, \ \wx, \ \wy, \ \widetilde{z},
\]
where $\wtau, \ws,$ and $\wx$ are the defining functions of tf, rf and ff, respectively. The coordinates are valid whenever $(\wtau, \ws, \wu)$ are bounded as $(t,x,\wx)$ approach zero. The edge vector fields obey the following transformation rules:
\[
\beta^*(x\partial_x)=  \ws \partial_{\ws}, \ \beta^*(x\partial_y)= \ws \partial_{\wu}, \ \beta^*(\partial_z)=\partial_z.
\]
By Theorems \ref{thm-Fried} and \ref{thm-incomplete} we infer that for any $n\in \N_0$, 
\begin{align*}
\beta^*(V_e^{k+n} \HF)(\widetilde{\tau}, \widetilde{s}, \widetilde{u}, z, \wx, \wy, \wz)
&=\wx^{-1-b} \widetilde{s}^{\nu_{\min}+1/2} G(\widetilde{\tau}, \widetilde{s}, \widetilde{u}, z, \wx, \wy, \wz), \\
\beta^*(V_e^{k+n} E^{ij}_\Gamma)(\widetilde{\tau}, \widetilde{s}, \widetilde{u}, z, \wx, \wy, \wz)
&=\wx^{-1-b+\nu_i + \nu_j} \widetilde{s}^{-\nu_j+1/2} G'(\widetilde{\tau}, \widetilde{s}, \widetilde{u}, z, \wx, \wy, \wz),
\end{align*}
where $G,G'$ are bounded in their entries and infinitely vanishing as $\widetilde{\tau}$ goes to zero, with the same compact support as $\beta^*\HG$. For the transformation rule of the volume form we compute

\[
\beta^*(d\wx \dvb(\wx))=h \cdot \wx^{1+b} \widetilde{\tau}^{-1} d\widetilde{\tau}\, d\widetilde{u}\, d\wz,
\]
where $h$ is a bounded distribution on $\mathscr{M}_h^2$.
Hence we can write for any $\w \in C^k_e(\mathscr{U}, \Lambda^p T^*\mathscr{U})^\Phi$
\begin{align*}
\int V_e^{k+n} (x/\wx)^{-\beta} \HF \w \, d\wx \, d\wy \, d\wz 
&=\int \wtau^{-1} \ws^{-\beta+\nu_{\min}+1/2} G \w d\widetilde{\tau}\, d\widetilde{u}\, d\wz, \\
\int V_e^{k+n} (x/\wx)^{-\beta} E^{ij}_\Gamma \w \, d\wx \, d\wy \, d\wz 
&=\int \wtau^{-1} \ws^{ -\beta-\nu_j+1/2} \wx^{\nu_i+\nu_j}  G' \w d\widetilde{\tau}\, d\widetilde{u}\, d\wz.
\end{align*}
By the choice of the range for the weight $\beta$, the exponent on $\ws$ is at least $0$.  Since $G$ and $G'$ are infinitely vanishing as $\widetilde{\tau}$ goes to zero, we find
\[
\| \HG (x^\beta \w) \|_{x^{\beta}C^{k+n}_e} \leq C\, \| \w \| \leq C\, \| x^\beta \w \|_{x^{\beta}C^{k}_e}.
\]
Note that this estimate also accounts for the second statement of the theorem, since 
for $\w \in C^\A_e(\mathscr{U}, \Lambda^p T^*\mathscr{U})^\Phi$ we have 
$\|\w\| \leq \|\w \|_\A$. \bigskip

\underline{\emph{Estimates near the top corner of the front face}} \ \bigskip

The asymptotic behaviour of the heat kernel $\HG$, compactly supported near the top corner of ff, is appropriately described in the projective coordinates
\[
\rho=\sqrt{t}, \  \xi=\frac{x}{\rho}, \ \widetilde{\xi}=\frac{\widetilde{x}}{\rho}, \ u=\frac{y-\widetilde{y}}{\rho}, \ y, \ z, \ \widetilde{z},
\]
where $\rho, \xi$, and $\widetilde{\xi}$ are the defining functions of the faces ff, rf and lf, respectively. The coordinates are valid whenever $(\rho, \xi,\widetilde{\xi})$ are bounded as $(t,x,\wx)$ approach zero. The edge vector fields obey the following transformation rules:
\[
\beta^*(x\partial_x)=\xi\partial_{\xi}, \ \beta^*(x\partial_y)=\xi \partial_u + \rho \xi \partial_y, \ \beta^*(\partial_z)=\partial_z.
\]
By Theorems \ref{thm-Fried} and \ref{thm-incomplete} we infer that for any $n\in \N_0$, 
\begin{align*}
\beta^*(V_e^{k+n} \HF)( \rho, \xi, y, z, \widetilde{\xi}, u, \widetilde{z})&=
\rho^{-1-b} (\xi \widetilde{\xi})^{\nu_{\min}+1/2} G( \rho, \xi, y, z, \widetilde{\xi}, u, \widetilde{z}), \\
\beta^*(V_e^{k+n} E^{ij}_\Gamma)( \rho, \xi, y, z, \widetilde{\xi}, u, \widetilde{z})
&=\rho^{-1-b+\nu_i + \nu_j} \xi^{-\nu_j+1/2} \widetilde{\xi}^{-\nu_i + 1/2} 
G'( \rho, \xi, y, z, \widetilde{\xi}, u, \widetilde{z}),
\end{align*}
where $G,G'$ are bounded in their entries, with the same compact support as $\beta^*\HG$. 
For the transformation rule of the volume form we compute

\[
\beta^*(d\wx \dvb(\wx))=h \cdot \rho^{1+b} d\widetilde{\xi}\, du\, d\wz,
\]
where $h$ is a bounded distribution on $\mathscr{M}_h^2$.
Hence we can write for any $\w \in C^k_e(\mathscr{U}, \Lambda^p T^*\mathscr{U})^\Phi$
\begin{align*}
\int V_e^{k+n} (x/\wx)^{-\beta} \HF \w \, d\wx \, d\wy \, d\wz 
&=\int \xi^{\nu_{\min}+1/2 -\beta} \widetilde{\xi}^{\nu_{\min}+1/2+\beta}G \w  d\widetilde{\xi}\, du\, d\wz, \\
\int V_e^{k+n} (x/\wx)^{-\beta} E^{ij}_\Gamma \w \, d\wx \, d\wy \, d\wz 
&=\int \xi^{-\nu_j +1/2 - \beta} \widetilde{\xi}^{-\nu_i +1/2 + \beta}\rho^{\nu_i+\nu_j} 
G' \w d\widetilde{\xi}\, du\, d\wz.
\end{align*}
By the choice of the range for the weight $\beta$, the expressions above are integrable and we find
\[
\| \HG (x^\beta \w) \|_{x^{\beta}C^{k+n}_e} \leq C\, \| \w \| \leq C\, \| x^\beta \w \|_{x^{\beta}C^{k}_e}.
\]
Note that this estimate also accounts for the second statement of the theorem, since 
for $\w \in C^\A_e(\mathscr{U}, \Lambda^p T^*\mathscr{U})^\Phi$ we have 
$\|\w\| \leq \|\w \|_\A$. \bigskip

\underline{\emph{Estimates where the diagonal meets the front face}} \ \bigskip

The asymptotic behaviour of the heat kernel with compact support in the neighbourhood where td meets ff is appropriately described in the projective coordinates
\begin{align}\label{td2}
\eta^2=\frac{t}{x^2}, \ S=\frac{x-\wx}{\sqrt{t}}, \ U=\frac{y-\wy}{\sqrt{t}}, \ Z=\frac{x(z-\wz)}{\sqrt{t}}, \ x, \ y, \ z
\end{align}
where in these coordinates $\eta$ and $x$ are the defining functions of td and ff, respectively. The edge vector fields obey the following transformation rules:
\[
\beta^*(x\partial_x)=-\eta \partial_{\eta} +\frac{1}{\eta}\partial_S + Z \partial_Z + x\partial_x, \ \beta^*(x\partial_y)=\frac{1}{\eta}\partial_U + x \partial_y, \ \beta^*(\partial_z)=\frac{1}{\eta}\partial_Z + \partial_z.
\]
By Theorems \ref{thm-Fried} and \ref{thm-incomplete} we infer that for any $n\in \N_0$, 
\begin{align*}
\beta^*(V_e^{n} \HF)(\eta, S, U, Z, x, y, z)&=x^{-1-b} \eta^{-m-n} G(\eta, S, U, Z, x, y, z), \\
\beta^*(V_e^{n} E^{ij}_\Gamma)(\eta, S, U, Z, x, y, z)&=x^{-1-b+\nu_i + \nu_j} G'(\eta, S, U, Z, x, y, z),
\end{align*}
where $G,G'$ are bounded in their entries and $G'$ is infinitely vanishing as $\eta$ goes to zero, 
with the same compact support as $\beta^*\HG$. For the transformation rule of the volume form we compute

\[
\beta^*(d\wx \dvb(\wx))=h \cdot x^{1+b} \eta^m dS\, dU\, dZ,
\]
where $h$ is a bounded distribution on $\mathscr{M}_h^2$.
Note that the factor $(x/\wx)^{-\beta}$ arising from the weighting of the spaces is bounded and can be omitted near the diagonal. Hence we can write for any $\w \in C^k_e(\mathscr{U}, \Lambda^p T^*\mathscr{U})^\Phi$
\begin{align*}
\int V_e^{n} \HF \w \, d\wx \, d\wy \, d\wz 
&=\int \eta^{-n}  G \w dS\, dU\, dZ, \\
\int V_e^{n} E^{ij}_\Gamma \w \, d\wx \, d\wy \, d\wz 
&=\int x^{\nu_i+\nu_j} \eta^m G' \w  dS\, dU\, dZ.
\end{align*}
The expressions above are integrable and we find
\[
\| \HG (x^\beta \w) \|_{x^{\beta}C^{n}_e} \leq C\, t^{-n/2}\, \| \w \| = C\,  t^{-n/2}\, \| x^\beta \w \|_{x^{\beta}C^{0}_e},
\]
This completes the proof of estimate \eqref{est:Ck-spaces-inc} for $k=0$. 

For the general case, it suffices to consider the impact of $k+n$ edge derivatives
applied to $\HF$. The estimates for the $E^{ij}_\Gamma$ components do not change, 
since $\beta^*(E^{ij}_\Gamma)$ vanishes to infinite order at the temporal diagonal $\td$. 
Transforming the edge derivatives into projective coordinates near where the diagonal meets the front face, we see that the only possible singular behaviour comes from components of the form $\eta^{-1}\partial_{S}, \eta^{-1}\partial_{U}$ and $\eta^{-1}\partial_{Z}$. We consider $\eta^{-1}\partial_{S}$ (the others are similar) and estimate the integral
\begin{align*}
F:&=\int \eta^{-1} \partial_{S} \beta^* \HF(\eta, S,U,Z,x,y,z) \w  x^{1+b} \eta^m dS\, dU\, dZ\\
&= \int \eta^{-1} \partial_{S}G \w\left(x(1-\eta S), y-x\eta U, z-\eta Z \right)  dS\, dU\, dZ.
\end{align*}
We perform integration by parts, where the boundary terms lie away from the diagonal and hence are infinitely vanishing for $t\to 0$ by the asymptotic behaviour of the heat kernel. Omitting these irrelevant terms we obtain
\begin{align*}
F&=\int  G \eta^{-1}\partial_{S} \w\left(x(1-\eta S), y-x\eta U, z-\eta Z \right)  dS\, dU\, dZ\\
&= \int  -\frac{1}{(1 - \eta S)} G \left( \wx \partial_{\wx}\, \w\right) \left(x(1-\eta S), y-x\eta U, z-\eta Z \right)  dS\, dU\, dZ.
\end{align*}
Note that $\frac{1}{1 - \eta S} = \frac{x}{\wx}$, which is bounded near td.
Since $G (\eta, S,U,Z,x,y,z)$ is bounded in its entries, and in fact infinitely vanishing as $|(S, U, Z)| \to \infty$, we can estimate
\[
\| \HF (x^\beta \w) \|_{x^{\beta}C^{1}_e} \leq C\, \| x^\beta \w \|_{x^{\beta}C^{1}_e}.
\]

Repeating the same argument for $k>1$ edge derivatives we find that the heat operator is bounded in its action on 
$x^\beta C^k_e(\mathscr{U}, \Lambda^p T^*\mathscr{U})^\Phi$.  
In view of the proof for $k=0$, this proves the estimate \eqref{est:Ck-spaces-inc} in full generality and it remains to check \eqref{est:holder-spaces-inc}. 
Again, due to the infinite order vanishing of each $\beta^*E^{ij}_\Gamma$ at the temporal diagonal $\td$, 
it suffices to prove the statement for $\HF$.
Consider
\begin{align*}
&\int \eta^{-2} \partial^2_{S}\HF \w \ d\wx \, d\wy \, d\wz \\ = \, &
 \int \eta^{-2}\partial^2_{S}G \left(\w\left(x(1-\eta S), y-x\eta U, z-\eta Z \right) -\w(x,y,z)\right) dS\, dU\, dZ \\ +\, &
 \int \eta^{-2}\partial^2_{S}G \w\left(x,y,z\right) dS\, dU\, dZ :=E_1 + E_2.
\end{align*}
Using the H\"older condition on $\w$ we deduce for the first integral (read the estimates componentwise)
\begin{align*}
|E_1| &\leq \|\w\|_\A \, \int \eta^{-2}  
\left( \left|x\eta S\right|^{\A} + \left|x\eta U\right|^{\A} + \left|(2-\eta S)x\eta Z\right|^{\A} \right)
|G''| dS\, dU\, dZ \\ &\leq  \|\w\|_\A \int \eta^{-2+\A}  
\left( \left|S\right|^{\A} + \left|U\right|^{\A} + \left|Z\right|^{\A} \right)
|G''| dS\, dU\, dZ \leq C\, t^{-1+\A/2} \|\w\|_\A. 
\end{align*}
where $G''$ is some bounded polyhomogeneous distribution, vanishing to infinite order as $|(S,U,Z)|\to \infty$.
For the second integral $E_2$ we use the fact that  $G$ vanishes to infinite order as $|(S,U,Z)|\to \infty$ and employ Stokes' 
theorem for $dS$ integration to deduce the statement.

We have proved Theorem \ref{thm:main:incomplete} for the heat operator acting on $x^{\beta}C^*_e(M,{}^{ie}\Lambda^p M)$. 
In order to complete the proof for the heat kernel acting on $x^\beta \mathscr{C}^k_e$ spaces, it suffices to prove that the image of 
$x^\beta C^k_e$ under the heat operator lies in $x^\beta \mathscr{C}_e^{k+2}$. For any fixed $t>0$ we may write in standard coordinates $((x,y,z),(\wx,\wy,\wz))$
\begin{align*}
\int V_e^{k+n} (x/\wx)^{-\beta} \HF \w \, d\wx \, d\wy \, d\wz 
&=\int x^{\nu_{\min}+1/2 -\beta} \widetilde{x}^{\nu_{\min}+1/2+\beta}G \w  d\widetilde{x}\, d\wy\, d\wz, \\
\int V_e^{k+n} (x/\wx)^{-\beta} E^{ij}_\Gamma \w \, d\wx \, d\wy \, d\wz 
&=\int x^{-\nu_i +1/2 - \beta} \widetilde{x}^{-\nu_j +1/2 + \beta}
G' \w d\widetilde{x}\, d\wy\, d\wz.
\end{align*}
Thus for $\beta$ away from the right boundary of the weight interval, 
the integrals above are $O(x^\epsilon)$ as $x\to 0$ for some $\epsilon >0$.
Consequently, $e^{-t\Delta}\left(x^\beta C^k_e\right)\subset x^\beta \mathscr{C}_e^{k+2}$.

It remains to prove Theorem \ref{thm:main:incomplete-functions}.
This is a direct consequence of Proposition \ref{heat-expansion}, which asserts $z$-independence of the leading coefficient in the right face expansion of $H$. 
Note that for any polyhomogeneous $\w \sim \sum x^\gamma a_\gamma(y,z)$ as $x\to 0$ with $\gamma\geq 0$ in a discrete set and $a_0$ independent of $z$,
the edge derivatives $x\partial_x \w, x\partial_y \w$ and $\partial_z \w$ vanish identically at $x=0$.
Consequently, for any fixed $t>0$ and any $f\in C^k_e(M)$, Proposition \ref{heat-expansion} implies $((V_1 \dots V_{k+2}) e^{-t\Delta}f)(0,y,z)\equiv 0$ for any 
$V_j\in \mathcal{V}_e, j=1,\dots,k+2$. Hence $e^{-t\Delta}\left(C^k_e(M)\right)\subset \mathscr{C}_e^{k+2}(M)$.
This proves Theorem \ref{thm:main:incomplete-functions}.

\subsection{Estimates on spaces with a complete edge metric}  \label{est:complete}

In this section we prove Theorem \ref{thm:main:complete}.  
Once again the main task in the proof is to check the estimates, as the continuity of $e^{-t \Delta} \w$ and its derivatives are easy to obtain by the dominated convergence theorem in each of the coordinate systems that will be discussed.  The proof of the mapping properties amounts to estimating edge derivatives of the heat kernel lifted to $\mathscr{M}^2_h$; these estimates are made locally in various coordinate patches of $\mathscr{M}^2_h$.

The estimates for the heat kernel supported near the diagonal away from the front face or supported in the interior of $\mathscr{M}^2_h$ are known (e.g., \cite{LadSolUra:LQE}), so our discussion reduces to the case of the heat kernel compactly supported in an open neighbourhood of the front face.  Within this neighbourhood, we consider the heat kernel being supported in each of three coordinate charts: one near each of the left and right corners of the front face, for which we use coordinates \eqref{left-coord}, and one near the intersection of td and ff, for which we use coordinates \eqref{diag-coord}.  Although the argument is not completely symmetric in the left and right corners due to the different roles played in the integration by tilded and untilded variables, the estimation is analogous, as we indicate below.  These three charts correspond to different asymptotic regimes as the variables $(t, x, \tilde{x})$ approach zero. 

Before we specialize to the coordinate charts, we note that the estimates of Theorem \ref{thm:main:complete} for weighted spaces follow from the estimates for unweighted spaces.  Indeed, proving the estimates for weighted spaces is equivalent to proving the unweighted estimates for the conjugated operator $X^{-\beta} e^{-t \Delta} X^{\beta}$, where $X$ refers to the obvious multiplication operator.  Written as an integral operator, we have
\begin{equation*}
e^{-t\Delta}\w(x,y,z) = \int_M x^{-\beta} H\left( t, (x,y,z), (\wx, \wy, \wz) \right) \wx^{\beta} \cdot \w(\wx,\wy,\wz) \wx^{-b-1} d\wx \dvb(\wx). 
\end{equation*}
One can check that in the coordinate systems \eqref{left-coord} and \eqref{diag-coord}, the conjugated integral kernel $x^{-\beta} H \wx^{\beta}$ has the same asymptotics as $H$ at all boundary hypersurfaces of $\mathscr{M}^2_h$.  Thus we may assume $w=0$ without loss of generality. \bigskip

\underline{\emph{Estimates near the lower left corner of the front face}} \ \bigskip

We begin by assuming that the heat kernel is supported near the left-hand corner and work in the coordinates \eqref{left-coord}
\[
\tau = \sqrt{t}, \, \ s = \frac{\wx}{x}, \, \ u = \frac{y - \myy}{x}, \, x,\, y,\, z, \, \wz.
\]
In these coordinates $\tau, s,$ and $x$ are the defining functions of tf, lf and ff respectively. The coordinates are valid whenever $(\tau, s, x)$ are bounded as $(t,x,\wx)$ approach zero. The edge vector fields obey the following transformation rules:
$$\beta^*(x\partial_x)=-s\partial_s - u \partial_u + x \partial_x, \ \beta^*(x\partial_y)=\partial_u + x \partial_y, \ \beta^*(\partial_z)=\partial_z.$$
Hence by Theorem \ref{thm:Albin-htkrnl} we find that for any $n\in \N_0$, $\beta^*(V_e^nH)(\tau, s, u, x,y,z,\wz)$
is bounded in $x$ and vanishing to infinite order as $\tau \to 0$ and $s \to 0$.  For the transformation rule of the volume form we compute 
$$\beta^*(\wx^{-b-1}d\wx \dvb(\wx))=h \cdot s^{-b-1}ds\, du\, d\wz,$$
where $h$ is a bounded distribution on $\mathcal{M}_h^2$. Hence for any $\w\in C^k_e(M,{}^{e}\Lambda^p M)$, we have
\begin{align*}
 V_e^ne^{-t\Delta}\w(x,y,z)=\int h\cdot \beta^*(V_e^nH)(\tau, s, u, x,y,z,\wz) \w(sx,y-xu,\wz)s^{-b-1} ds\, du\, d\wz.
\end{align*}
Since $\beta^*(V_e^nH)(\tau, s, u, x,y,z,\wz)$ is vanishing to infinite order as $s \to 0$, we find
$$ \|e^{-t\Delta}\w\|_{C^{k+n}_e}\leq C \|\w\| \leq C \cdot \|\w\|_{C_e^k},$$
for any $k\in \N_0$. The second statement of Theorem \ref{thm:main:complete} follows as well, since for 
$\w \in C_e^{\A}(M,{}^{e}\Lambda^p M)$ we have $\|\w\|\leq \|\w\|_\A$. 

\bigskip

\underline{\emph{Estimates near the lower right corner of the front face}} \ \bigskip

In the complete case the estimates near the right corner are simplified by the infinite order vanishing of the heat kernel at the right face.  We begin by assuming that the heat kernel is supported near the right-hand corner and work in the coordinates \eqref{left-coord}
\[
\tau = \sqrt{t}, \, \ \ws = \frac{x}{\wx}, \, \ \wu = \frac{y - \myy}{\wx}, \, \wx,\, \wy,\, z, \, \wz.
\]
In these coordinates $\tau, \ws,$ and $\wx$ are the defining functions of tf, rf and ff respectively. The coordinates are valid whenever $(\tau, \ws, \wx)$ are bounded as $(t,x,\wx)$ approach zero. The edge vector fields obey the following transformation rules:
$$\beta^*(x\partial_x)= \ws\partial_{\ws}, \ \beta^*(x\partial_y)=\ws \partial_{\wu}, \ \beta^*(\partial_z)=\partial_z.$$
Hence by Theorem \ref{thm:Albin-htkrnl} we find that for any $n\in \N_0$, $\beta^*(V_e^nH)(\tau, \ws, \wu, \wx,\wy,z,\wz)$ is bounded in $\wx$ and vanishing to infinite order as $\tau \to 0$ and $\ws \to 0$.  For the transformation rule of the volume form we compute 
$$\beta^*(\wx^{-b-1}d\wx \dvb(\wx))=h \cdot \ws^{-1} d\ws\, d\wu\, d\wz,$$
where $h$ is a bounded distribution on $\mathcal{M}_h^2$. Hence for any $\w\in C^k_e(M,{}^{e}\Lambda^p M)$, we have
\begin{align*}
 V_e^ne^{-t\Delta}\w(x,y,z)=\int h\cdot \beta^*(V_e^nH)(\tau, \ws, \wu, \wx,\wy,z,\wz) \w(x/\ws,y-\wx \wu,\wz) \ws^{-1} d\ws\, d\wu\, d\wz.
\end{align*}
Since $\beta^*(V_e^nH)(\tau, \ws, \wu, \wx,\wy,z,\wz)$ is vanishing to infinite order as $\ws \to 0$, we find
$$ \|e^{-t\Delta}\w\|_{C^{k+n}_e}\leq C \|\w\| \leq C \cdot \|\w\|_{C_e^k},$$
for any $k\in \N_0$. The second statement of Theorem \ref{thm:main:complete} follows as well, since for 
$\w \in C_e^{\A}(M,{}^{e}\Lambda^p M)$ we have $\|\w\|\leq \|\w\|_\A$. 

\bigskip

\underline{\emph{Estimates near the diagonal}} \ \bigskip

The asymptotic behaviour of the heat kernel with compact support in the neighbourhood where the diagonal meets the front face is appropriately described in the projective coordinates \eqref{diag-coord}
\[
\eta=\sqrt{t}, \ S=\frac{x-\wx}{x\sqrt{t}}, \ U=\frac{y-\wy}{x\sqrt{t}}, \ Z=\frac{z-\wz}{\sqrt{t}}, \ x, \ y, \ z .
\]
Recall that in these coordinates $\eta$ and $x$ are the defining functions of td and ff, respectively. The edge vector fields obey the following transformation rules:
\[
\beta^*(x\partial_x)=(1-\eta S)\frac{1}{\eta}\partial_S -U\partial_U + x\partial_x, \ \beta^*(x\partial_y)=\frac{1}{\eta}\partial_U + x \partial_y, \ \beta^*(\partial_z)=\frac{1}{\eta}\partial_Z + \partial_z.
\]
Hence by Theorem \ref{thm:Albin-htkrnl} we infer that for any $n\in \N_0$,
\[
\beta^*(V_e^n H)(\eta, S, U, Z, x, y, z)=\eta^{-m-n} G(\eta, S, U, Z, x, y, z), 
\]
where $G$ is bounded in its entries, and in fact infinitely vanishing as $|(S, U, Z)|\to \infty$, with the same compact support as $\beta^*H$. The coordinates $(\wx,\wy,\wz)$ on the second copy of $M$ are expressed in terms of the projective coordinates \eqref{diag-coord} by
$$\wx=x(1-\eta S), \ \wy=y-x\eta U, \ \wz=z-\eta Z.$$
Thus the transformation rule of the volume form is
\[
\beta^*(\wx^{-b-1} d\wx \dvb(\wx))=h \eta^m (1-\eta S)^{-b-1} dS\, dU\, dZ,
\]
where $h$ is a bounded distribution on $\mathscr{M}^2_h$. Hence for any $\w\in C_e^0(M,{}^{e}\Lambda^p M)$, we have
\begin{align*}
D:&=\int_M V_e^{n} H\left( t, (x,y,z), (\wx, \wy, \wz) \right) \w(\wx,\wy,\wz) \wx^{-b-1} d\wx \dvb(\wx) \\ &=\int \eta^{-n} (1-\eta S)^{-b-1} h\cdot G\,  \w(x(1-\eta S), y-x\eta U, z-\eta Z) \, dS\, dU\, dZ .
\end{align*}
Estimating $ (1-\eta S)^{-b-1} h\cdot G$ against a constant, we find
\[
|D| \leq C \cdot t^{-n/2} \|\w\|.
\]
This completes the proof of estimate \eqref{est:Ck-spaces} for $k=0$. For the general case, let $\w\in C_e^k(M,{}^{e}\Lambda^p M)$ and consider the impact of $(k+n)$ edge derivatives. Transforming the edge derivatives into projective coordinates near the intersection of the diagonal and the front face, we see that the only possible singular behaviour comes from components of the form $\eta^{-1}\partial_{S}, \eta^{-1}\partial_{U}$ and $\eta^{-1}\partial_{Z}$. We consider $\eta^{-1}\partial_{S}$ (the others are similar) and estimate the integral
\begin{align*}
F:&=\int \eta^{-1}\left(\partial_{S}\beta^*H\right)(\eta, S,U,Z,x,y,z)\w\cdot h \eta^m (1-\eta S)^{-b-1} dS\, dU\, dZ\\
&= \int \eta^{-1}\left(\partial_{S}G\right)(\eta, S,U,Z,x,y,z)\w\cdot h (1-\eta S)^{-b-1} dS\, dU\, dZ.
\end{align*}
We integrate by parts and note that $G$ is infinitely vanishing for $S \to \pm \infty$ by the asymptotic behaviour of the heat kernel. Omitting this boundary term we obtain
\begin{align*}
F&=\int  G (\eta, S,U,Z,x,y,z) \eta^{-1}\partial_{S} \left(\w\cdot h (1-\eta S)^{-b-1}\right)  dS\, dU\, dZ\\
&= \int  G  \left[\eta^{-1}\partial_{S}\w\left(x(1-\eta S), y-x\eta U, z-\eta Z \right)\right]  h (1-\eta S)^{-b-1} dS\, dU\, dZ\\
&+ \int  G  f \cdot \left[\eta^{-1}\partial_{S}h\left(x(1-\eta S), y-x\eta U, z-\eta Z\right)(1-\eta S)^{-b-1}\right]   dS\, dU\, dZ.
\end{align*}
After differentiating in the second integral we see that no $\eta^{-1}$ term remains, and this integral can be estimated using the same approach as for $D$ above.  For the first integral note that
\[
 \eta^{-1}\partial_{S}\w\left(x(1-\eta S), y-x\eta U, z-\eta Z\right)
= -\frac{1}{(1-\eta S)}\left( \wx \partial_{\wx}\w\right) \left(x(1-\eta S), y-x\eta U, z-\eta Z\right).
\]
The edge derivative $\wx \partial_{\wx}f$ is bounded above by $\|\w\|_{C_e^1}$; the remaining terms in the integral can be estimated as for $D$ since $G (\eta, S,U,Z,x,y,z)$ is bounded in its entries and in fact infinitely vanishing as $|(S, U, Z)|\to \infty$.  Thus $|F|\leq C \|\w\|_{C_e^1}.$

Repeating the same argument for $k>1$ edge derivatives we find that the heat operator is bounded in its action on $C_e^k(M,{}^{e}\Lambda^p M)$. 
In view of the proof of estimate \eqref{est:Ck-spaces} for $k=0$, the first statement of Theorem \ref{thm:main:complete} holds in full generality and it remains to check the second statement.  Consider

\begin{align*}
&\int \eta^{-2} \partial^2_{S}  H(t,(x,y,z), (\wx, \wy, \wz))\w(\wx, \wy, \wz)\wx^{-b-1} d\wx \dvb(\wx) \\ = \, &
 \int \eta^{-2}\partial^2_{S}G \left(\w\left(x(1-\eta S), y-x\eta U, z-\eta Z\right) -\w(x,y,z)\right) h (1-\eta S)^{-b-1} dS\, dU\, dZ \\ +\, &
 \int \eta^{-2}\partial^2_{S}G \w\left(x,y,z\right) h (1-\eta S)^{-b-1} dS\, dU\, dZ =:E_1 + E_2.
\end{align*}
Using H\"older condition on $\w$ we deduce for the first integral (read the estimates componentwise)
\begin{align*}
& |E_1| \leq \|\w\|_\A \, \int \eta^{-2}  
\left( \left|\frac{\eta S}{(2-\eta S)}\right|^{\A} + \left|\frac{\eta U}{(2-\eta S)}\right|^{\A} + \left|\eta Z\right|^{\A} \right)
|G''| dS\, dU\, dZ \\ 
&\leq  \|\w\|_\A \int \eta^{-2+\A}  
\left( \left|S\right|^{\A} + \left|U\right|^{\A} + \left|Z\right|^{\A} \right)
|G''(\eta, S,U,Z,x,y,z)| dS\, dU\, dZ \leq C\, t^{-1+\A/2} \|\w\|_\A. 
\end{align*}
where $G''$ is some bounded polyhomogeneous distribution, vanishing to infinite order as $|(S,U,Z)|\to \infty$.
For the second integral $E_2$, we use the fact that  $G$ vanishes to infinite order as $|(S,U,Z)|\to \infty$ and employ Stokes' theorem for $dS$ integration to deduce the statement.
The proof of Theorem \ref{thm:main:complete} is now complete.

\section{Short-time existence of solutions to semilinear parabolic equations}\label{s-short}

In this section we provide an application of our estimates to prove short-time existence of solutions to some semilinear parabolic equations on both complete and incomplete edge spaces.  We follow \cite{JefLoy:RSH, JefLoy:RHM} closely.  The underlying idea is based on \cite{Tay:PDE}.

Consider an equation of the form
\begin{equation} \label{semilinear:IVP}
\left\{ \begin{array}{ll} \partial_t u +\Delta u &= Q(t, x, y, z, u, V_e u), \\
                            u(0) &= u_0, \end{array} \right. \end{equation}
where $\Delta$ denotes the appropriate edge Laplacian and $Q$ denotes inhomogeneous terms that involve up to one edge derivative of $u$.  We assume that $Q$ is smooth in its arguments.  For notational convenience, we set $\Phi(u) = Q(u, V_e u)$, where we have suppressed the dependence of $Q$ on the time and local space variables.  Applying Duhamel's principle, we transform the problem to an equivalent integral equation:
\[ u(t) = e^{-t \Delta} u_0 + \int_0^t e^{-(t-s) \Delta} \Phi(u(s)) ds. \]
With appropriate assumptions and the estimates of Theorems \ref{thm:main:incomplete} or \ref{thm:main:complete}, this equation may be solved by the contraction mapping principle, as we now explain.

Recall that a semigroup $S_t$ of operators on a Banach space $X$ is strongly continuous if for all $f \in X$,
\[ || S_t f - f ||_X \longrightarrow 0 \; \mbox{as} \; t \longrightarrow 0. \]

Taylor proves:
\begin{theorem} \cite[Proposition 15.1.1]{Tay:PDE}
Suppose that $X$ and $Y$ are Banach spaces such that:
\begin{enumerate}
\item $e^{-t \Delta}: X \longrightarrow X$ is a strongly continuous semigroup, for $t \geq 0$.
\item $\Phi: X \longrightarrow Y$ is locally Lipschitz.
\item $e^{-t \Delta}: Y \longrightarrow X$, for $t > 0$.
\item For some $\gamma < 1$, 
\[ ||e^{-t \Delta}|| \leq C t^{-\gamma}, \]
for $t \in (0,1]$, where $|| \cdot ||$ denotes the operator norm.
\end{enumerate}
Then for any $u_0 \in X$, the initial value problem \eqref{semilinear:IVP} has a unique solution $u \in C([0,T], X)$, for some $T > 0$.  $T$ may be estimated from below in terms of $||u_0||_{X}$.
\end{theorem}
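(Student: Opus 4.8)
The plan is to solve the initial value problem \eqref{semilinear:IVP} by recasting it, via Duhamel's principle, as the fixed-point equation $u=\Phi(u)$ with
\[
\Phi(u)(t) := e^{-t\Delta} u_0 + \int_0^t e^{-(t-s)\Delta}\, Q\bigl(u(s),V_e u(s)\bigr)\, ds,
\]
and then applying the Banach contraction mapping principle on a closed ball in $C([0,T],X)$. The four hypotheses are arranged precisely so that each ingredient of $\Phi$ is controlled: hypothesis (2) bounds the nonlinearity, hypotheses (3) and (4) bound the Duhamel integral, and hypothesis (1) supplies continuity of the free evolution $u_{\mathrm{hom}}(t):=e^{-t\Delta}u_0$. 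Throughout I would treat $Q$ abstractly as the locally Lipschitz map $X\to Y$, which is legitimate since controlling the single edge derivative $V_e u$ costs only the $X$-norm.

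First I would fix a radius $R>0$ and work in
\[
\mathcal{B}_{R,T} := \bigl\{ u \in C([0,T],X) : \textstyle\sup_{t\in[0,T]} \|u(t)-u_{\mathrm{hom}}(t)\|_X \le R \bigr\},
\]
a complete metric space in the $C([0,T],X)$ metric. To verify that $\Phi$ is a self-map, I would use that a locally Lipschitz $Q$ is bounded on the bounded subset of $X$ swept out by $\mathcal{B}_{R,T}$, say $\|Q(u(s))\|_Y \le K$; then hypothesis (4) gives
\[
\Bigl\| \int_0^t e^{-(t-s)\Delta} Q(u(s))\, ds \Bigr\|_X \le \int_0^t C(t-s)^{-\gamma} K\, ds = \frac{CK}{1-\gamma}\, t^{1-\gamma}.
\]
Since $\gamma<1$ the singular integral converges, and the bound tends to $0$ as $T\to 0$; choosing $T$ small makes it $\le R$, so that $\Phi(\mathcal{B}_{R,T})\subseteq \mathcal{B}_{R,T}$.

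Next I would establish the contraction property. For $u,v\in\mathcal{B}_{R,T}$ the homogeneous terms cancel in $\Phi(u)-\Phi(v)$, and with $L$ the Lipschitz constant of $Q$ on the relevant bounded set, hypothesis (4) yields
\[
\sup_{t}\|\Phi(u)(t)-\Phi(v)(t)\|_X \le \frac{CL}{1-\gamma}\, T^{1-\gamma}\, \sup_{s}\|u(s)-v(s)\|_X.
\]
Shrinking $T$ so that $\tfrac{CL}{1-\gamma}T^{1-\gamma}<1$ turns $\Phi$ into a contraction, whose unique fixed point is the desired solution in $C([0,T],X)$; since $K$, $L$, and the size of $u_{\mathrm{hom}}$ control and are controlled by $\|u_0\|$, the admissible $T$ can be estimated from below in terms of $\|u_0\|$. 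In the clean case $u_0\in X$, hypothesis (1) guarantees $u_{\mathrm{hom}}\in C([0,T],X)$ and hence that the fixed point attains the initial datum continuously.

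The part I expect to be the main obstacle is the behaviour at $t=0$. In the applications one has $X\subsetneq Y$ and the datum lies only in the rougher space $Y$, so the $L(Y,X)$ norm of $e^{-t\Delta}$ blows up like $t^{-\gamma}$ and $u_{\mathrm{hom}}$ need not remain bounded in $X$ as $t\to 0$; the naive space $C([0,T],X)$ then fails to contain the free evolution, and the boundedness of $Q$ used above is no longer automatic. The honest remedy is to run the contraction in a mixed space such as $C([0,T],Y)\cap C((0,T],X)$, using strong continuity to secure continuity into $Y$ up to $t=0$ and the integrability of $t^{-\gamma}$ to recover $X$-regularity for $t>0$, and only afterward to read off membership in $C([0,T],X)$ where it genuinely holds. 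Reconciling this mixed-space bookkeeping with the clean self-map and contraction estimates above, while keeping the dependence of $T$ on $\|u_0\|$ transparent, is the delicate step.
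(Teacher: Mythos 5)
Your argument is correct and is exactly the Duhamel-plus-contraction-mapping proof that the paper sketches in \S\ref{s-short} and otherwise delegates to Taylor's Proposition 15.1.1 by citation, so the approaches coincide. The $t=0$ difficulty you flag is a genuine imprecision in the statement as quoted rather than in your proof: in Taylor the initial datum is taken in $X$ (consistent with ``$T$ may be estimated from below in terms of $\|f\|_{X}$''), which makes $u_{\mathrm{hom}}\in C([0,T],X)$ and lets your clean fixed-point argument close without the mixed-space bookkeeping.
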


In view of this theorem and our previous estimates, to deduce short-time existence of solutions to \eqref{semilinear:IVP} for certain locally Lipschitz $\Phi$, it remains to prove strong continuity of the semigroup $e^{-t \Delta_{\mathscr{F}}}$ on either $X=\mathscr{C}^k_e(M,g)$ or $X=C^k_e(M,g)$, as appropriate.  See \cite{Grigoryan} for a classical proof of strong continuity on compact manifolds.
 
Our argument on incomplete edge manifolds will require stochastic completeness of the heat kernel, i.e., 
\begin{align}\label{stoch-compl}
 \int_M H( t, (x,y,z), (\wx,\wy,\wz) ) \dv = 1, \; \mbox{for all} \; (x,y,z) \in M, t > 0. 
\end{align}
For incomplete edge manifolds, this is a consequence of uniqueness of solutions to the heat equation. More precisely, with $\Delta$ being 
the Friedrichs extension of the Laplace Beltrami operator with domain $\dom(\Delta)$, the solutions to the initial value problem
\begin{align*}
\partial_t u+\Delta u=0, \ u(0)=u_0\in \dom (\Delta) \subset L^2(M, \textup{vol}(g))
\end{align*}
are unique and in fact given by $u(t)=e^{-t\Delta}u_0\in \dom (\Delta)$ for any $t>0$. 
For $\dim F \geq 1$, it can be shown that $1 \in \dom(\Delta)$.  In particular, recall that the Friedrichs' extension is associated to the Lagrangian matrix $\Gamma = \textup{diag}(\psi_1^+,\dots,\psi_q^+)$.  With this $\Gamma$, the condition in Definition \ref{defn:DG} for a function $u$ to be in the Friedrichs domain simplifies to $c_j^-[u] = 0$ for $\nu_j \in [0,1)$.  Recall that $\nu_j^2$ is an eigenvalue of the operator $A$ given by \eqref{a}; for functions, $\ell = 0$ and $A = \Delta_F + (\frac{f-1}{2})^2$, so the lowest eigenvalue of $A$ is $\nu_{\min}^2 = \left(\frac{f-1}{2}\right)^2$.  Thus the leading order term in the expansion of a Friedrichs solution $u$ as in \eqref{expansion-w} is $x^{\nu_{\min} + 1/2} = x^{f/2}$.  However, we have been working under the rescaling $\Phi$, so without the rescaling any function in the Friedrichs domain has leading order term $x^0 = 1$.  The eigenfunction paired with this term in the expansion \eqref{expansion-w} is any eigenfunction of $\Delta_F$ corresponding to the eigenvalue $0$, hence a fibrewise constant function.  Clearly, then, $u\equiv 1\in \dom(\Delta)$ 
is the unique solution in $\dom (\Delta)$ to the heat equation with initial value $1$, given in terms of the 
heat operator by $u\equiv 1=e^{-t\Delta}1$. This is precisely \eqref{stoch-compl}. 

In the complete edge case, $u\equiv 1$ does not lie in the Hilbert space $L^2(M, \textup{vol}(g))$ and on general complete manifolds, solutions of the heat equation need not be unique. However by a result of Yau \cite{Yau:OHK} we have \eqref{stoch-compl} on complete manifolds with Ricci curvature bounded from below; see \cite{Hsu:HSCR} for a discussion of this and related results.  In our setting, since an edge metric can be viewed at highest order as the product of an asymptotically hyperbolic metric and a compact metric, it is straightforward to check that the Ricci curvature is bounded from below.

Using equation \eqref{stoch-compl} has the disadvantage that the proof will only work for the Laplacian on functions.  At the time of writing the authors do not know how to prove strong continuity of the Hodge Laplacian on differential $p$-forms in the incomplete case.  In the complete case an alternate proof of strong continuity for differential forms may be fashioned by a rescaling technique.  One may cover the manifold by balls of uniform $g$-radius and pull the equation back to a standard model ball in euclidean space.  This will yield uniform estimates for strong continuity near the edge.  See \cite[Appendix A]{BahuaudRicciFlow}  for similar estimation.

\begin{prop} \label{prop:incstrcty}
Let $(M,g)$ be an $m$-dimensional Riemannian manifold with a feasible incomplete edge metric. 
Let $e^{-t\Delta}$ denote the heat operator corresponding to the Friedrichs extension $\DF$ of the associated Laplacian on $(M,g)$.
Then $e^{-t \Delta}$ is strongly continuous on $\mathscr{C}^k_e(M,g)$, for any $k \geq 0$.
\end{prop}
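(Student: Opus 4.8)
The plan is to establish that $\|e^{-t\Delta}f - f\|_k \to 0$ as $t \to 0^+$ for every $f \in \mathscr{C}^k_e(M,g)$, which is exactly strong continuity in the norm $\|\cdot\|_k$. Since this norm is the sum of the sup-norms of the edge derivatives $V_1\cdots V_j f$ for $0\le j\le k$, it suffices to prove
\[
\sup_M \, |V_1\cdots V_j(e^{-t\Delta}f - f)| \longrightarrow 0, \qquad 0\le j\le k,
\]
for arbitrary $V_i\in\mathcal{V}_e$. The uniform boundedness of $e^{-t\Delta}$ on $\mathscr{C}^k_e(M,g)$ for $t\in(0,1]$, which follows from estimate \eqref{est:Ck-spaces-inc} with $n=0$, provides uniform control over the finitely many coordinate-chart contributions that appear below.

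First I would treat the scalar term $j=0$. By stochastic completeness (\S\ref{subsec:Stoch}) we have $e^{-t\Delta}1=1$, hence
\[
e^{-t\Delta}f(p) - f(p) = \int_M H(t,p,\tilde p)\,[f(\tilde p)-f(p)]\,\dv(\tilde p).
\]
In the interior and near the top corner the kernel $H$ concentrates on the diagonal as $t\to 0$, so the integrand is supported where $\tilde p$ is close to $p$ and $|f(\tilde p)-f(p)|$ is small by uniform continuity of $f$ on the compact space $\overline{M}$. The delicate region is the edge: in the lower-corner coordinates of \S\ref{s:est-incomplete} the kernel localizes $(\tilde x,\tilde y)$ near $(x,y)$ but integrates over the whole fibre in $\tilde z$, so that $e^{-t\Delta}f$ near $\partial M$ sees a fibre average of $f$. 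This is exactly where the restriction to $\mathscr{C}^0_e(M,g)$ — functions fibrewise constant at $x=0$ — together with the $z$-independence of the leading right-face coefficient of $H$ from Proposition \ref{heat-expansion}, ensures that the fibre average still converges to $f(p)$ uniformly up to the edge, yielding the $j=0$ convergence.

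For the derivative terms $j\ge 1$ I would transform $V_1\cdots V_j e^{-t\Delta}f$ into the projective coordinate systems of \S\ref{s:est-incomplete} and, exactly as in the proof of Theorem \ref{thm:main:incomplete}, integrate by parts to move the (possibly singular) untilded edge derivatives onto $f$. This rewrites $V_1\cdots V_j e^{-t\Delta}f$ as a principal integral against a bounded kernel of an order-$j$ edge derivative of $f$ in the tilded variables, which lies in $\mathscr{C}^0_e(M,g)$, plus error integrals involving strictly lower-order tilded edge derivatives of $f$. The principal term converges to $V_1\cdots V_j f(p)$ by the scalar mechanism of the previous paragraph. For the error terms I would again invoke stochastic completeness in its differentiated form $V_1\cdots V_j e^{-t\Delta}1=0$: testing the whole identity against constants forces the $t\to 0$ limit of the zeroth-order part of every error kernel to vanish, and the remaining contribution is again of the difference type $f(\tilde p)-f(p)$, hence negligible by continuity. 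Summing over the charts completes the argument.

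The main obstacle is the derivative case, and within it two points require care. First, the bookkeeping of the integration by parts near the temporal diagonal, where the factors $\eta^{-1}$ must be fully absorbed either into edge derivatives of $f$ or cancelled through the relation $V_1\cdots V_j e^{-t\Delta}1=0$, so that no spurious $t^{-1/2}$ survives in the limit. Second, the uniformity up to the edge of the fibre-averaging limit in the scalar step, which is the only place where the defining hypotheses of $\mathscr{C}^k_e(M,g)$ and Proposition \ref{heat-expansion} are genuinely used; the interior and top-corner regions, by contrast, are standard and follow from the classical concentration of the heat kernel.
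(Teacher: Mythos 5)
Your proposal follows essentially the same route as the paper: stochastic completeness reduces the problem to an integral against $f(\widetilde{p})-f(p)$, continuity of $f$ with respect to the metric topology (which is exactly membership in $\mathscr{C}^0_e$) handles the edge region, and integration by parts together with $V_e^j e^{-t\Delta}1=0$ handles $j\ge 1$. The only differences are cosmetic: the paper makes ``concentration on the diagonal'' quantitative by splitting $M$ into the regions $d(p,\widetilde{p})\gtrless\delta(\epsilon)$ and bounding $\int H\, d(p,\widetilde{p})/\sqrt{t}\,\dv$ uniformly in projective coordinates, and it never needs Proposition \ref{heat-expansion} --- the fibre-averaging issue you flag is absorbed into $d$-continuity, since the incomplete edge distance collapses fibres at $x=0$.
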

\begin{proof}
We prove the statement by adapting the classical proof of strong continuity of the heat operator on closed (non-singular) manifolds to the present setup.
Assume first $k=0$. Using stochastic completeness of the heat kernel we find
\begin{align*}
(e^{-t \Delta } f)(p, t) - f(p) &= \int_M H\left( t, p, \widetilde{p} \right) (f(\widetilde{p})-f(p)) \dv(\widetilde{p}).
\end{align*}
Note that $f\in \mathscr{C}^0_e(M,g)$ and hence for any $\epsilon >0$ there exists some $\delta(\epsilon) >0$, such that for $d(p,\widetilde{p}) \leq \delta(\epsilon)$ one has
$|f(p)-f(\widetilde{p})| \leq \epsilon$. For any given $\epsilon >0$ we separate the integration region into 
\begin{eqnarray}\label{int_regions}
 M^+_\epsilon & := & \{\widetilde{p} \mid d(p,\widetilde{p}) \geq \delta(\epsilon)\}, \notag \\
 M^-_\epsilon & := & \{\widetilde{p} \mid d(p,\widetilde{p}) \leq \delta(\epsilon)\}.
\end{eqnarray}
Employing continuity of $f$ we find
\begin{eqnarray*}
& |e^{-t \Delta } f - f| = \left| \int_M H\left( t, p, \widetilde{p} \right) (f(\widetilde{p})-f(p)) \dv(\widetilde{p}) \right| \\
\leq &\int_{M^+} H\left( t, p, \widetilde{p} \right) |f(\widetilde{p})-f(p)| \dv(\widetilde{p}) 
+ \int_{M^-} H\left( t, p, \widetilde{p} \right) |f(\widetilde{p})-f(p)| \dv(\widetilde{p}) \\
\leq & \, 2 \frac{\sqrt{t}}{\delta(\epsilon)} \, \|f\|_{0}\int_{M^+} H\left( t, p, \widetilde{p} \right) \frac{d(p,\widetilde{p})}{\sqrt{t}} \dv(\widetilde{p})
+ \epsilon \, \int_{M^-} H\left( t, p, \widetilde{p} \right) \dv(\widetilde{p}). 
\end{eqnarray*}
The second integral is bounded by $\epsilon$. We must verify in projective coordinates that the first integral is bounded, uniformly in $(t,p, \epsilon)$.
It suffices to assume the heat kernel is supported near the front face, since away from ff the estimates reduce to the classical case of a manifold without boundary near the diagonal.
Assume first that the heat kernel is supported in a neighbourhood where td meets ff, and use the projective coordinates \eqref{td2}:
\begin{align*}
\eta^2=\frac{t}{x^2}, \ S=\frac{x-\wx}{\sqrt{t}}, \ U=\frac{y-\wy}{\sqrt{t}}, \ Z=\frac{x(z-\wz)}{\sqrt{t}}, x, y, z.
\end{align*}
In these coordinates $\eta$ and $x$ are the defining functions of td and ff, respectively. 
By Theorem \ref{thm-incomplete} we infer
\begin{equation*}
\beta^*H(\eta, S, U, Z, x, y, z)=x^{-m}\eta^{-m} G(\eta, S, U, Z, x, y, z), 
\end{equation*}
where $G$ is bounded in its entries, and in fact infinitely vanishing as $|(S, U, Z)|\to \infty$, with the same compact support as $\beta^*H$. 
The transformation rule for the volume form is
\begin{align*}
\beta^*(\wx^{f} d\wx \dvb(\wx))=h (x\eta)^m (1-\eta S)^f dS\, dU\, dZ,
\end{align*}
where $h = h\left( \eta, x(1-\eta S), y-x\eta U, z-\eta Z, x, y, z \right)$ is a bounded distribution on $\mathscr{M}^2_h$. 
We may estimate $d(p,\widetilde{p}) / \sqrt{t}$:
\begin{align*}
 \frac{d(p,\widetilde{p})}{\sqrt{t}} \leq C \sqrt{|S|^2+|U|^2+|Z|^2}.
\end{align*}
Hence we find 
\begin{align*}
\int_{M^+} H\left( t, p, \widetilde{p} \right) \frac{d(p,\widetilde{p})}{\sqrt{t}} \dv(\widetilde{p})  
\leq C' \int h (1-\eta S)^f  G \cdot \sqrt{|S|^2+|U|^2+|Z|^2}\, dS\, dU\, dZ \leq C'',
\end{align*}
where $C''$ is independent of $(t,x,y,z)$ and $\epsilon$.  Entirely analogous estimation works near the top corner of the front face.  In the estimate near the left corner (resp. right corner) of the front face one finds that $d(p,\widetilde{p})/\sqrt{t}$ is bounded by a power of $\tau^{-1}$ (resp. $\widetilde{\tau}^{-1}$), which may be absorbed into the heat kernel as it vanishes to infinite order as $\tau \rightarrow 0$ (resp. $\widetilde{\tau} \rightarrow 0$) at this corner. 
Therefore we obtain
\begin{align*}
 \| e^{-t \Delta } f - f \|_0 \leq C'' \frac{\sqrt{t}}{\delta(\epsilon)} \, \|f\|_{0} + \epsilon.
\end{align*}
Thus, for any given $\epsilon >0$ we can estimate $\| e^{-t \Delta } f - f \|_0 \leq 2\epsilon$ for $\sqrt{t} < \epsilon \delta(\epsilon) / (C'' \|f\|_{0})$.
This proves strong continuity of the heat operator on $\mathscr{C}^0_e(M,g)$. \\

We prove strong continuity on $\mathscr{C}^k_e(M,g)$ for $k\geq 1$ in a similar fashion. Near the diagonal it requires an integration by parts argument like the one in \S \ref{s:est-incomplete}. The estimates away from the diagonal are the same as for $\mathscr{C}^0_e(M,g)$, 
so we assume that $\beta^*H$ is compactly supported in a neighbourhood where td meets ff.
The edge vector fields obey the following transformation rules:
\begin{align*}
\beta^*(x\partial_x)=-\eta \partial_{\eta} +\frac{1}{\eta}\partial_S + Z \partial_Z + x\partial_x, \ \beta^*(x\partial_y)=\frac{1}{\eta}\partial_U + x \partial_y, \ \beta^*(\partial_z)=\frac{1}{\eta}\partial_Z + \partial_z.
\end{align*}
We consider $|| x \partial_x ( e^{-t \Delta} f - f )||_0$. Using stochastic completeness of the heat kernel, we find
\begin{align*}
F&:=x \partial_x ( e^{-t \Delta} f - f)
=\int (x\partial_xH) f(\wx,\wy,\wz)\wx^{f} d\wx \dvb(\wx) \\ &- \int (x\partial_x) [H f(x,y,z) \wx^{f} d\wx \dvb(\wx)]
=:F_1-F_2.
\end{align*}
Next we transform to projective coordinates and integrate by parts in $S$, where the boundary terms lie away from the diagonal and hence are infinitely vanishing for $t\to 0$ by the asymptotic behaviour of the heat kernel. Omitting these irrelevant terms,  we obtain
\begin{align*}
F_1&=\int \left(-\eta \partial_{\eta} +\frac{1}{\eta}\partial_S+Z\partial_Z +x\partial_x\right) \left[(x\eta)^{-m}G(\eta, S, U, Z, x, y, z)\right] \\ 
&\times f\left(x(1-\eta S), y-x\eta U, z-\eta Z\right) h (x\eta)^m (1-\eta S)^f dS\, dU\, dZ \\
&= \int \left[(-\eta \partial_{\eta} + Z \partial_Z + x\partial_x) (x\eta)^{-m} G\right]  \cdot f h(x\eta)^m (1-\eta S)^f dS\, dU\, dZ \\
&- \int G \left[\left(\frac{1}{\eta}\partial_S\right)f\right] h(1-\eta S)^f dS\, dU\, dZ\\
&- \int (x\eta)^{-m} G \cdot f \left[\left(\frac{1}{\eta}\partial_S \right) h(x\eta)^m(1-\eta S)^f\right] dS\, dU\, dZ.
\end{align*}
We perform similar computations for $F_2$:
\begin{align*}
F_2&=\int \left[(x\partial_x H)f(x,y,z) +H(x\partial_xf)\frac{}{}\right]\wx^f d\wx \dvb(\wx)\\
&=\int \left(\left[-\eta \partial_{\eta} +\frac{1}{\eta}\partial_S+Z\partial_Z +x\partial_x\right] (x\eta)^{-m}G\right) f\cdot h(x\eta)^m (1-\eta S)^f dS\, dU\, dZ \\ &+ \int G(\eta, S, U, Z, x, y, z)  (x\partial_xf(x,y,z)) h (1-\eta S)^f dS\, dU\, dZ \\
&=  \int \left[(-\eta \partial_{\eta} + Z \partial_Z + x\partial_x) (x\eta)^{-m}G\right]  \cdot f h(x\eta)^m (1-\eta S)^f dS\, dU\, dZ\\
&- \int (x\eta)^{-m}G \cdot f \left[\left(\frac{1}{\eta}\partial_S \right) h(x\eta)^m(1-\eta S)^f\right] dS\, dU\, dZ\\
&+\int G(\eta, S, U, Z, x, y, z)  (x\partial_xf(x,y,z)) h (1-\eta S)^f dS\, dU\, dZ.
\end{align*}
Thus $F=F_1-F_2$ becomes
\begin{align*}
F&= \int \left[(-\eta \partial_{\eta} + Z \partial_Z + x\partial_x) (x\eta)^{-m}G(\eta, S, U, Z, x, y, z)\frac{}{}\right] h(x\eta)^m (1-\eta S)^f \\ 
&\times  (f(x(1-\eta S), y-x\eta U, z-\eta Z)-f(x,y,z)) dS\, dU\, dZ \\
&- \int G(\eta, S, U, Z, x, y, z) \left[\left(\frac{1}{\eta}\partial_S\right) h \cdot (1-\eta S)^f\right] \\
&\times  (f(x(1-\eta S), y-x\eta U, z-\eta Z)-f(x,y,z))  dS\, dU\, dZ \\
&- \int G \left[\frac{1}{\eta}\partial_S f(x(1-\eta S), y-x\eta U, z-\eta Z) +x\partial_xf(x,y,z) \right] h(1-\eta S)^f dS\, dU\, dZ.
\end{align*}
Now, each of the three integrals is estimated as above for $k=0$ by separating the integration region into $M^+_\epsilon$ and $M^-_\epsilon$ for any given $\epsilon >0$.  Note that in the final integral we use the fact that $f \in \mathscr{C}^1_e(M,g)$.
Higher order and other edge derivatives may be estimated in a similar way. 
\end{proof}

\begin{prop}
Let $(M,g)$ be an $m$-dimensional Riemannian manifold with a feasible complete edge metric. 
Let $e^{-t\Delta}$ denote the heat operator of the unique self-adjoint extension $\Delta$ of the associated Laplacian on $(M,g)$.
Then $e^{-t \Delta}$ is strongly continuous on $C^k_e(M,g)$, for any $k \geq 0$.
\end{prop}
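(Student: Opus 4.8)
\emph{Proof proposal.} The plan is to run the argument of Proposition \ref{prop:incstrcty} in parallel, replacing the incomplete heat kernel asymptotics of Theorem \ref{thm-incomplete} by those of Theorem \ref{thm:Albin-htkrnl} and using the projective coordinates \eqref{left-coord} and \eqref{diag-coord} together with the volume transformations recorded in \S\ref{est:complete}. Stochastic completeness of the heat kernel, established in \S\ref{subsec:Stoch} by means of Yau's theorem, again lets us write for $f\in C^0_e(M,g)$
\[
(e^{-t\Delta}f)(p)-f(p)=\int_M H(t,p,\widetilde p)\,(f(\widetilde p)-f(p))\,\dv(\widetilde p),
\]
and this difference structure is what makes strong continuity accessible.

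First I would treat $k=0$. Fixing $\epsilon>0$, uniform continuity of $f$ supplies $\delta(\epsilon)>0$ with $|f(p)-f(\widetilde p)|\leq\epsilon$ whenever $d(p,\widetilde p)\leq\delta(\epsilon)$, and I split the integral over the regions $M^+_\epsilon$ and $M^-_\epsilon$ of \eqref{int_regions}. The contribution of $M^-_\epsilon$ is at most $\epsilon$ by stochastic completeness, while on $M^+_\epsilon$ one estimates $|f(\widetilde p)-f(p)|\leq 2\|f\|_0\,d(p,\widetilde p)/\delta(\epsilon)$, factors out $\sqrt t/\delta(\epsilon)$, and reduces to bounding $\int_{M^+_\epsilon}H(t,p,\widetilde p)\,(d(p,\widetilde p)/\sqrt t)\,\dv(\widetilde p)$ uniformly in $(t,p,\epsilon)$. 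Assuming $\beta^*H$ is supported near the front face, only the region where td meets ff is nontrivial: Theorem \ref{thm:Albin-htkrnl} forces infinite order vanishing of $H$ at lf, rf and tf, so the left- and right-corner charts contribute harmlessly. In coordinates \eqref{diag-coord} one has $\wx=x(1-\eta S)$, $\wy=y-x\eta U$, $\wz=z-\eta Z$, so the complete-edge distance expression yields
\[
\frac{d(p,\widetilde p)}{\sqrt t}\approx\sqrt{\frac{S^2+U^2}{(2-\eta S)^2}+Z^2}\leq C\sqrt{S^2+U^2+Z^2}
\]
near the corner. Combining $\beta^*H=\eta^{-m}G$ with the volume rule $\beta^*(\wx^{-b-1}d\wx\,\dvb(\wx))=h\,\eta^m(1-\eta S)^{-b-1}\,dS\,dU\,dZ$, the powers of $\eta$ cancel, the integrand equals $G$ times bounded factors, and the rapid decay of $G$ as $|(S,U,Z)|\to\infty$ gives the uniform bound. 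This produces $\|e^{-t\Delta}f-f\|_0\leq C\sqrt t\,\delta(\epsilon)^{-1}\|f\|_0+\epsilon$, hence strong continuity on $C^0_e(M,g)$.

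For $k\geq1$ I would apply an edge vector field to $e^{-t\Delta}f-f$ and, differentiating the identity $1=e^{-t\Delta}1$ furnished by stochastic completeness, arrange every resulting term to carry the difference $f(x(1-\eta S),y-x\eta U,z-\eta Z)-f(x,y,z)$ or an edge derivative of $f$. The transformation rules for $\beta^*(x\partial_x),\beta^*(x\partial_y),\beta^*(\partial_z)$ in \eqref{diag-coord} introduce potentially singular factors $\eta^{-1}\partial_S,\eta^{-1}\partial_U,\eta^{-1}\partial_Z$, which I would remove by integration by parts in the corresponding variable exactly as in Proposition \ref{prop:incstrcty}; the boundary terms are supported away from the diagonal and so vanish to infinite order as $t\to0$. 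Each resulting integral is then estimated by the $k=0$ splitting into $M^+_\epsilon$ and $M^-_\epsilon$, using $f\in C^1_e(M,g)$ to control the edge derivative of $f$. I expect the main obstacle to be the bookkeeping of these integrations by parts for $k\geq1$: one must keep every term in the form of a difference so that the $M^-_\epsilon$ contribution stays $O(\epsilon)$, which is delicate but entirely parallel to the computation already carried out in the incomplete case.
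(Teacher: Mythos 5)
Your proposal is correct and follows essentially the same route as the paper's proof: stochastic completeness to reduce to a difference, the $M^\pm_\epsilon$ splitting with the distance bound $d(p,\widetilde p)/\sqrt t\leq C\sqrt{S^2+U^2+Z^2}$ in coordinates \eqref{diag-coord} for $k=0$, and for $k\geq1$ the integration by parts in $S$ (and its analogues) to absorb the singular $\eta^{-1}$ factors while keeping every term as a difference of $f$ or an edge derivative of $f$. The paper carries out exactly this bookkeeping for $x\partial_x$ explicitly, confirming that the step you flag as delicate does go through.
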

\begin{proof}
We proceed as in the incomplete edge case, again modeling the proof after the classical estimate on a closed manifold.  We estimate near td and ff; the estimation near either the left corner or right corner is simpler than what follows due to the infinite order vanishing of the heat kernel at tf.

We first consider the case $k=0$ and show
$ || e^{-t \Delta} f - f||_0 \rightarrow 0 \ \mbox{as} \ t \rightarrow 0. $
Using stochastic completeness of the heat kernel we find
\begin{align*}
(e^{-t \Delta } f)(p,t) - f(p) &= \int_{M} H\left( t, p, \widetilde{p} \right) (f(\widetilde{p})-f(p)) \dv(\widetilde{p}).
\end{align*}

Recall that $f \in C^0_e(M)$ means $f$ is continuous on a compact manifold with boundary, so for every $\epsilon > 0$ we obtain a uniform $\delta = \delta(\epsilon)$, where $d(p,\widetilde{p}) < \delta$ implies $|f(p)-f(\widetilde{p})|<\epsilon$.  We separate the integration region into $M^+_\epsilon$ and $M^-_\epsilon$ as in \eqref{int_regions} and find
\begin{align*}
|e^{-t \Delta } f & - f| = \left| \int_M H\left( t, p, \widetilde{p} \right) (f(\widetilde{p})-f(p)) \dv(\widetilde{p}) \right| \\
&\leq \int_{M^+} H\left( t, p, \widetilde{p} \right) |f(\widetilde{p})-f(p)| \dv(\widetilde{p}) + \int_{M^-} H\left( t, p, \widetilde{p} \right) |f(\widetilde{p})-f(p)| \dv(\widetilde{p}) \\
&\leq \, 2\frac{\sqrt{t}}{\delta(\epsilon)} \, \|f\|_{0}\int_{M^+} H\left( t, p, \widetilde{p} \right) \frac{d(p,\widetilde{p})}{\sqrt{t}} \dv(\widetilde{p})
+ \epsilon. 
\end{align*}

Assume that the heat kernel is supported in a neighbourhood where td meets ff, and use the appropriate projective coordinates \eqref{diag-coord}
\begin{align*}
\eta=\sqrt{t}, \ S=\frac{x-\wx}{x\sqrt{t}}, \ U=\frac{y-\wy}{x\sqrt{t}}, \ Z=\frac{z-\wz}{\sqrt{t}}, \ x, y, z .
\end{align*}
In these coordinates $\eta$ and $x$ are the defining functions of td and ff, respectively. The edge vector fields obey the transformation rules:
\begin{align*}
\beta^*(x\partial_x)=(1-\eta S)\frac{1}{\eta}\partial_S -U\partial_U + x\partial_x, \ \beta^*(x\partial_y)=\frac{1}{\eta}\partial_U + x \partial_y, \ \beta^*(\partial_z)=\frac{1}{\eta}\partial_Z + \partial_z.
\end{align*}
Hence, by Theorem \ref{thm:Albin-htkrnl} we infer
\begin{align*}
\beta^*H(\eta, S, U, Z, x, y, z)=\eta^{-m} G(\eta, S, U, Z, x, y, z), 
\end{align*}
where $G$ is bounded in its entries, and in fact infinitely vanishing as $|(S, U, Z)|\to \infty$, with the same compact support as $\beta^*H$. The coordinates $(\wx,\wy,\wz)$ on the second copy of $M$ are expressed in terms of the new projective coordinates by
$$\wx=x(1-\eta S), \ \wy=y-x\eta U, \ \wz=z-\eta Z.$$
From there we compute the transformation rule of the volume form
\begin{align*}
\beta^*(\wx^{-b-1} d\wx \dvb(\wx))=h \eta^m (1-\eta S)^{-b-1} dS\, dU\, dZ,
\end{align*}
where $h = h(\eta, x(1-\eta S), y-x\eta U, z-\eta Z, x, y, z)$ is a bounded distribution on $\mathscr{M}^2_h$ whose arguments we will suppress.  Further, we can estimate
\begin{equation*}
\frac{d(p,\widetilde{p})}{\sqrt{t}} \leq \sqrt{ \frac{S^2}{(2-\eta S)^2} + \frac{U^2}{(2-\eta S)^2} + Z^2 } 
\leq C \sqrt{ S^2 + U^2 + Z^2 }.
\end{equation*}
Hence we can write after cancellations
\begin{align*}
|e^{-t\Delta} f &- f| \leq \, 2\frac{\sqrt{t}}{\delta(\epsilon)} \, \|f\|_{0}\int_{M^+} H\left( t, p, \widetilde{p} \right) \frac{d(p,\widetilde{p})}{\sqrt{t}} \dv(\widetilde{p}) + \epsilon \\
&\leq \, C\frac{\sqrt{t}}{\delta(\epsilon)} \, \|f\|_{0} \int  h (1-\eta S)^{-b-1} \cdot G \cdot \sqrt{ S^2 + U^2 + Z^2 } dS\, dU\, dZ + \epsilon.\\
&\leq \, C \frac{\sqrt{t}}{\delta(\epsilon)} \, \|f\|_{0}  + \epsilon,
\end{align*}
where $C$ is independent of $(p,t)$ (see \S \ref{est:complete} for similar estimation near where td meets ff).  As in Proposition \ref{prop:incstrcty}, we conclude
\[ \lim_{t \rightarrow 0} || e^{-t \Delta} f - f ||_0 = 0, \]
completing the case $k=0$. 

To prove strong continuity on $C_e^k(M,g)$ for $k \geq 1$ requires an integration by parts argument similar to the one in Proposition \ref{prop:incstrcty}.  We consider $|| x \partial_x ( e^{-t \Delta} f - f )||_0$. Using stochastic completeness of the heat kernel, we find
\begin{align*}
F&:=x \partial_x ( e^{-t \Delta} f - f)
=\int (x\partial_xH) \cdot f(\wx,\wy,\wz)\wx^{-b-1} d\wx \dvb(\wx) \\ &- \int (x\partial_x) [H(t, x,y,z,\wx,\wy,\wz) f(x,y,z)] \wx^{-b-1} d\wx \dvb(\wx)
=:F_1-F_2.
\end{align*}
Next we transform to projective coordinates and integrate by parts in $S$, where the boundary terms lie away from the diagonal and hence are infinitely vanishing for $t\to 0$ by the asymptotic behaviour of the heat kernel. Omitting these irrelevant terms, we obtain
\begin{align*}
F_1&=\int \left((1-\eta S)\frac{1}{\eta}\partial_S -U\partial_U + x\partial_x\right) \left[ \eta^{-m}G(\eta, S, U, Z, x, y, z)\right] \\ 
&\times f\left(x(1-\eta S), y-x\eta U, z-\eta Z\right) h\eta^m (1-\eta S)^{-b-1} dS\, dU\, dZ \\
&= \int \left[(-U \partial_U + x\partial_x) \eta^{-m}G\right]  \cdot f h\eta^m (1-\eta S)^{-b-1} dS\, dU\, dZ \\
&- \int G \cdot (\partial_S f) \cdot h \eta^{-1} (1-\eta S)^{-b} dS\, dU\, dZ\\
&- \int G \cdot f \cdot \partial_S \eta^{-1} (h (1-\eta S)^{-b}) dS\, dU\, dZ.
\end{align*}
We perform similar computations for $F_2$:
\begin{align*}
F_2&=\int \left((x\partial_x H)f(x,y,z) +H\cdot(x\partial_xf)\frac{}{}\right)\wx^{-b-1} d\wx \dvb(\wx)\\
&=\int \left((1-\eta S)\frac{1}{\eta}\partial_S -U\partial_U + x\partial_x\right) [\eta^{-m}G] f\cdot h\eta^m (1-\eta S)^{-b-1} dS\, dU\, dZ \\ &+ \int G(\eta, S, U, Z, x, y, z)  (x\partial_xf(x,y,z)) h (1-\eta S)^{-b-1} dS\, dU\, dZ \\
&=  \int \left[ ( -U \partial_U + x\partial_x) \eta^{-m}G\right]  \cdot f(x,y,z) h\eta^m (1-\eta S)^{-b-1} dS\, dU\, dZ\\
&- \int G \cdot f \cdot \partial_S \left( \eta^{-1} h (1-\eta S)^{-b}\right) dS\, dU\, dZ\\
&+\int G(\eta, S, U, Z, x, y, z)  (x\partial_xf(x,y,z)) h (1-\eta S)^{-b-1} dS\, dU\, dZ.
\end{align*}
Carefully recalling the variables we have suppressed, we see that $F=F_1-F_2$ becomes
\begin{align*}
F&= \int \left[(-U \partial_U + x\partial_x) \eta^{-m}G(\eta, S, U, Z, x, y, z)\frac{}{}\right] h\eta^m (1-\eta S)^{-b-1} \\ 
&\times  (f(x(1-\eta S), y-x\eta U, z-\eta Z)-f(x,y,z)) dS\, dU\, dZ \\
&- \int G(\eta, S, U, Z, x, y, z) (\partial_S (\eta^{-1} h(x(1-\eta S), y-x\eta U, z-\eta Z) (1-\eta S)^{-b}) ) \\
&\times  (f(x(1-\eta S), y-x\eta U, z-\eta Z)-f(x,y,z))  dS\, dU\, dZ \\
&- \int G \cdot \left( (1-\eta S) \partial_S ( \eta^{-1} f(x(1-\eta S), y-x\eta U, z-\eta Z)) +x\partial_xf(x,y,z) \right) \cdot \\
& \times h (1-\eta S)^{-b-1} dS\, dU\, dZ.
\end{align*}
All three integrals are now estimated as in the $k=0$ case.  Note that in the second integral, the apparently singular $\eta^{-1}$ is cancelled upon first performing the differentiation; in the third integral, we use the fact that $f \in C_e^1 (M,g)$.  Higher order and other edge derivatives may be similarly estimated.
\end{proof}

\providecommand{\bysame}{\leavevmode\hbox to3em{\hrulefill}\thinspace}
\providecommand{\MR}{\relax\ifhmode\unskip\space\fi MR }

\providecommand{\MRhref}[2]{%
  \href{http://www.ams.org/mathscinet-getitem?mr=#1}{#2}
}
\providecommand{\href}[2]{#2}

\listoffigures

\end{document}